\theoremstyle{plain}
\newtheorem{theorem}{Theorem}[section]
\newtheorem*{theorem*}{Theorem}
\newtheorem{proposition}{Proposition}[theorem]
\newtheorem{lemma}[theorem]{Lemma}
\newtheorem{corollary}[theorem]{Corollary}
\theoremstyle{definition}
\newtheorem*{definition*}{Definition}
\newtheorem{conjecture}[theorem]{Conjecture}
\numberwithin{equation}{section}
\newcommand{\B}{\mathbb{B}}
\newcommand{\C}{\mathbb{C}}
\newcommand{\D}{\mathbb{D}}
\newcommand{\N}{\mathbb{N}}
\newcommand{\R}{\mathbb{R}}
\newcommand{\1}{\mathbb{1}}
\newcommand{\Bloch}{\mathcal{B}}
\newcommand{\lilBlochDist}{\Bloch_{0^*}}
\newcommand{\Hinfu}[1]{H^\infty_{\hspace{-.3ex}#1}}
\newcommand{\Hinfmu}{\Hinfu{\mu}}
\newcommand{\Log}{\mathrm{Log}}
\newcommand{\conj}[1]{\overline{#1}}
\newcommand{\modu}[1]{\left|#1\right|}
\newcommand{\distbd}[1]{\partial^*\hspace{-.4ex} #1}
\newcommand{\Aut}{\mathrm{Aut}}
\newcommand{\norm}[1]{\left|\left|#1\right|\right|}
\newcommand{\supnorm}[1]{\norm{#1}_\infty}
\newcommand{\blochnorm}[1]{\norm{#1}_\Bloch}
\newcommand{\munorm}[1]{\norm{#1}_{\Hinfmu}}
\newcommand{\wcompop}[2]{W_{#1,#2}}
\newcommand{\wco}{\wcompop{\psi}{\varphi}}
\newcommand{\qte}[1]{``#1"}
\newcommand{\listitem}[1]{\textup{(}#1\textup{)}}
\subjclass[2010]{primary: 47B38, secondary: 32A18} 
\keywords{Weighted composition operators, Bloch space, Weighted Banach space, Homogeneous domain.}
\title[Weighted Composition Operators]{Weighted Composition Operators from the Bloch Space to\\ Weighted Banach Spaces on Bounded Homogeneous Domains}
\author{Robert F.~Allen}
\address{Department of Mathematics, University of Wisconsin-La Crosse}
\email{rallen@uwlax.edu}
\begin{document}

\begin{abstract}
We study the bounded and the compact weighted composition operators from the Bloch space into the weighted Banach spaces of holomorphic functions on bounded homogeneous domains, with particular attention to the unit polydisk.  For bounded homogeneous domains, we characterize the bounded weighted composition operators and determine the operator norm.  In addition, we provide sufficient conditions for compactness.  For the unit polydisk, we completely characterize the compact weighted composition operators, as well as provide \qte{computable} estimates on the operator norm.
\end{abstract}

\maketitle

\section{Introduction}
Let $X$ and $Y$ be Banach spaces of holomorphic functions on a domain $\Omega \subset \C^n$.  For holomorphic functions $\psi:\Omega \to \C$ and $\varphi:\Omega\to\Omega$, the \textit{weighted composition operator} from $X$ to $Y$ with symbols $\psi$ and $\varphi$ is defined as $$\wco f = \psi(f\circ\varphi),$$ for $f \in X$.  The weighted composition operator is the generalization of the \textit{multiplication operator} $M_\psi f = \psi f$ and the \textit{composition operator} $C_\varphi f = f\circ\varphi$, called the component operators.

The study of weighted composition operators on the Bloch space of the unit disk $\D$ began with the work of Ohno and Zhao \cite{OhnoZhao:2001}, where the boundedness and compactness were characterized.  In higher dimensions, these operators on the Bloch space have been studied on the unit polydisk by Chen, Stevi\'c, and Zhou \cite{ChenStevicZhao}, and on bounded homogeneous domains by the author and Colonna \cite{AllenColonna:2010}.

The bounded and compact weighted composition operators from the Bloch space to $H^\infty$ were characterized by Ohno \cite{Ohno:2001} and Hosokawa, Izuchi, and Ohno \cite{HosokawaIzuchiOhno:2005} in the one-dimensional case, and by Li and Stevi\'c \cite{LiStevic:2007-II} in the case of the unit ball.  In \cite{AllenColonna:2010}, the author and Colonna characterized the boundedness, determined the operator norm, and gave a sufficient condition for compactness in the case of a general bounded homogeneous domain in $\C^n$.

Composition and multiplication operators on $\Hinfmu(\D)$ were first studied by Bonet, Doma\'{n}iski, Lindstr\"{o}m and Taskinen \cite{BonetDomaniskiLindstromTaskinen:1998}, and then later by Bonet, Doma\'{n}iski and Lindstr\"{o}m \cite{BonetDomaniskiLindstrom:1999,BonetDomaniskiLindstrom:1999-II}.  The weighted composition operators on these spaces were studied by Contreras and Hern\'{a}ndez-D\'{i}az \cite{ContrerasHernandezDiaz:2000}, and by Galindo and Lindstr\"{o}m \cite{GalindoLindstrom:2010}.

In \cite{Stevic:2008}, Stevi\'c determined the norm of the bounded weighted composition operators from the Bloch space to the weighted Banach space $\Hinfmu$ of the unit ball by means of the more general $\alpha$-Bloch spaces.  Likewise, Yang characterized the bounded and compact weighted composition operators from the Bloch space to $\Hinfmu$ on the unit ball via the $\alpha$-Bloch space \cite{Yang:2009}.  Zhu characterized the weighted composition operators from the Bloch space to $\Hinfmu$ on the unit ball by means of studying the $F(p,q,s)$ spaces, for which the Bloch space is a special case \cite{Zhu:2009}.  These operators have not previously been studied for spaces of functions defined on the unit polydisk.

A difficulty in the study of such operators on domains in several variables is the fact that function theory on the unit ball $\B_n$ and the unit polydisk $\D^n$ are vastly different.  In recent years, work has been done to generalize these two spaces in an effort to consolidate work.  The bounded homogeneous domains are a natural generalization of both $\B_n$ and $\D^n$, and spaces of holomorphic functions on such domains have been studied.  In this paper, we generalize the study of weighted composition operators from the Bloch space to the weighted Banach spaces from $\B_n$ and $\D^n$ to bounded homogeneous domains.  This unifies the study of such operators, and allows for study these operators on spaces that are not just $\B_n$ and $\D^n$, examples of which can be can be found in \cite{Vesentini:1967}.

Currently the study of these operators in higher dimensions has taken place in the setting of the unit ball $\B_n$.  This paper introduces to the literature, the study of these operators in the unit polydisk setting.  As is normally the case, the techniques used are different than those used for the unit ball.  In addition, the author and Colonna studied weighted composition operators from the Bloch space into $H^\infty$ on bounded homogeneous domains in \cite{AllenColonna:2010}.  This paper extends this work to the weighted Banach spaces $\Hinfmu$.

\subsection{Organization of the Paper}
In Section \ref{Section:Preliminaries}, we define the Bloch space and weighted Banach spaces $\Hinfmu$ on a bounded homogeneous domain as well as collect useful facts about Bloch functions on such domains.  In Section \ref{Section:Boundedness}, we characterize the bounded weighted composition operator from $\Bloch(D)$ into $\Hinfmu(D)$.  In addition, we determine the operator norm of such operators.  In Section \ref{Section:Compactness}, we develop sufficient conditions for the bounded weighted composition operators $W_{\psi,\varphi}:\Bloch(D) \to \Hinfmu(D)$ to be compact.  In Section \ref{Section:Polydisk}, we characterize the bounded and the compact weighted composition operators, as well as provide \qte{computable} estimates on the operator norm.  Finally, in Section \ref{Section:Conclusions} we end with some concluding thoughts and open problems.

\section{Preliminaries}\label{Section:Preliminaries}
Let $D$ be a domain in $\C^n$.  We denote by $H(D)$ the set of holomorphic functions from $D$ into $\C$, $S(D)$ the set of holomorphic self-maps of $D$, and by $\Aut(D)$ the set of biholomorphic maps of $D$. The space $H^\infty(D)$ of bounded holomorphic functions on $D$ is a Banach algebra equipped with norm $\supnorm{f} = \sup_{z \in D}\; \modu{f(z)}$.

A domain $D$ is \textit{homogeneous} if $\Aut(D)$ acts transitively on $D$.  Every homogeneous domain is equipped with a canonical metric, called the \textit{Bergman metric}, invariant under the action of $\Aut(D)$ \cite{Helgason:1962}.

For a continuous, strictly positive function $\mu:D \to \R_+$, called a \textit{weight}, the \textit{weighted Banach spaces} $\Hinfmu(D)$ is defined as $$\Hinfmu(D) = \left\{f \in H(D) : \sup_{z \in D}\; \mu(z)\modu{f(z)} < \infty\right\},$$ which is a Banach space under the norm $\munorm{f} = \sup_{z \in D}\; \mu(z)\modu{f(z)}$.  Notice that if $\mu \equiv 1$ on $D$, then $\Hinfmu(D) = H^\infty(D)$.

In \cite{Timoney:1980} and \cite{Timoney:1980-I}, Timoney defined and studied the Bloch functions on a bounded homogeneous domain. In this paper, we conform to his notation, as follows.

Let $D$ be a bounded homogeneous domain. For $z \in D$ and $f \in H(D)$, define $$Q_f(z) =
\sup_{u \in \C^n\setminus\{0\}}\; \frac{\modu{\nabla(f)(z)u}}{H_z(u,\conj{u})^{1/2}},$$ where $\nabla(f)(z)$ is the gradient of $f$ at $z$, for $u=(u_1,\dots,u_n)$, $$\nabla(f)(z)u = \sum_{k = 1}^n \frac{\partial f}{\partial z_k}(z)u_k,$$ and $H_z$ is the Bergman metric on $D$ at $z$.  For the definition of the Bergman metric (also known as the Poincar\'{e} metric or distance), see \cite{Krantz:2000} Definition 1.4.14.  The Bergman metric for the unit polydisk $\D^n$ is defined as
\begin{equation}
\nonumber
H_z(u,\conj{v}) = \sum_{j=1}^n \frac{u_j\conj{v_j}}{(1-\modu{z_j}^2)^2},
\end{equation} where $u,v \in \C^n$ and $z \in \D^n$.

The \textit{Bloch space} $\Bloch(D)$ on a bounded homogeneous domain $D$ is the set of all functions $f \in H(D)$ for which $$\beta_f = \sup_{z \in D}\; Q_f(z) < \infty.$$  Timoney proved that $\Bloch(D)$ is a Banach space under the norm $$\blochnorm{f} = \modu{f(z_0)} + \beta_f,$$ where $z_0$ is some fixed point in $D$ \cite{Timoney:1980}. For convenience, we shall assume throughout that $0 \in D$ and choose $z_0 = 0$.

The {\it $*$-little Bloch space of $D$} is the subspace of $\Bloch(D)$ defined as $$\Bloch_{0*}(D) = \left\{f \in \Bloch(D) : \lim_{z \to \distbd{D}} Q_f(z) = 0\right\},$$ where $\distbd{D}$ is the distinguished boundary of $D$. 

In \cite{Timoney:1980}, Timoney also proved that $H^\infty(D)$ is a subspace of $\Bloch(D)$ and for each $f \in H^\infty(D)$, $\blochnorm{f} \leq |f(0)|+ c\supnorm{f}$ where $c$ is a constant depending only on the domain $D$.  

In the rest of the section, we collect useful results of Bloch functions on bounded homogeneous domains.  For a bounded homogeneous domain $D$, and $z \in D$, define
$$\begin{aligned}
\omega(z) &= \sup\;\{|f(z)| : f \in \Bloch(D), f(0) = 0 \text{ and } \norm{f}_\Bloch \leq 1\},\\
\omega_0(z) &= \sup\;\{|f(z)| : f \in \lilBlochDist(D), f(0) = 0 \text{ and } \norm{f}_\Bloch \leq 1\}.
\end{aligned}$$

\begin{lemma}[Lemma 4.1 of \cite{AllenColonna:2010}]\label{omega inequality} Let $D$ be a bounded homogeneous domain in $\C^n$. For each $z \in D$, $\omega(z)$ and $\omega_0(z)$ are both finite.  Moreover, $\omega_0(z) \leq \omega(z) \leq \rho(z,0)$, where $\rho(z,w)$ is the Poincar\'e distance between $z$ and $w$.\end{lemma}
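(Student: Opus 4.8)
The plan is to read off $\omega_0(z) \le \omega(z)$ directly from the definitions: since $\lilBlochDist(D) \subseteq \Bloch(D)$, every $f$ that is admissible in the supremum defining $\omega_0(z)$ is also admissible in the supremum defining $\omega(z)$, so enlarging the competitor class cannot decrease the value. Thus everything reduces to proving $\omega(z) \le \rho(z,0)$ together with $\rho(z,0) < \infty$, the latter yielding the asserted finiteness of both quantities.

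For the main inequality, fix $f \in \Bloch(D)$ with $f(0) = 0$ and $\blochnorm{f} \le 1$. Because $f(0) = 0$, we have $\blochnorm{f} = \beta_f$, hence $Q_f(w) \le \beta_f \le 1$ for every $w \in D$. Let $\gamma \colon [0,1] \to D$ be any piecewise $C^1$ curve with $\gamma(0) = 0$ and $\gamma(1) = z$. Since $f$ is holomorphic, the chain rule gives $\frac{d}{dt} f(\gamma(t)) = \nabla(f)(\gamma(t))\gamma'(t)$; applying the definition of $Q_f$ to the vector $u = \gamma'(t)$ (the integrand vanishing when $\gamma'(t) = 0$) yields
$$\modu{\nabla(f)(\gamma(t))\gamma'(t)} \le Q_f(\gamma(t))\, H_{\gamma(t)}(\gamma'(t),\conj{\gamma'(t)})^{1/2} \le H_{\gamma(t)}(\gamma'(t),\conj{\gamma'(t)})^{1/2}.$$
Integrating and using $f(0) = 0$,
$$\modu{f(z)} = \modu{\int_0^1 \tfrac{d}{dt} f(\gamma(t))\, dt} \le \int_0^1 H_{\gamma(t)}(\gamma'(t),\conj{\gamma'(t)})^{1/2}\, dt,$$
and the right-hand side is exactly the length of $\gamma$ in the Bergman metric. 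Taking the infimum over all such curves $\gamma$ gives $\modu{f(z)} \le \rho(z,0)$, and then the supremum over all admissible $f$ gives $\omega(z) \le \rho(z,0)$.

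Finally, $\rho(z,0)$ is finite because the Bergman metric is a smooth (hence locally bounded) Hermitian metric on $D$ and $D$ is connected, so $0$ and $z$ can be joined by a piecewise $C^1$ curve of finite Bergman length; this forces $\omega_0(z) \le \omega(z) \le \rho(z,0) < \infty$. I do not anticipate a genuine obstacle here: homogeneity of $D$ plays no role beyond setting up the objects $Q_f$, $\beta_f$, $H_z$, and $\rho$ in the preliminaries, and the only point needing a line of justification is the finiteness of $\rho(z,0)$, which is the standard fact that the integrated Bergman metric is a genuine (finite-valued) distance on the interior of a bounded domain.
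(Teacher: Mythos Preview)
Your argument is correct. The inclusion $\lilBlochDist(D)\subseteq\Bloch(D)$ gives $\omega_0(z)\le\omega(z)$ immediately, and your integration-along-curves estimate shows $|f(z)|\le\rho(z,0)$ for any admissible $f$, whence $\omega(z)\le\rho(z,0)$; finiteness of $\rho(z,0)$ is standard.

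There is nothing in the present paper to compare against: the lemma is quoted without proof, with a citation to \cite{AllenColonna:2010}. That said, your curve computation is precisely the forward direction of Lemma~\ref{bergmancharacterization} (also quoted here from \cite{AllenColonna:2009}), which asserts $\beta_f=\sup_{z\ne w}\modu{f(z)-f(w)}/\rho(z,w)$. Invoking that lemma directly would shorten your proof to one line: for admissible $f$ one has $\beta_f\le 1$, so $\modu{f(z)}=\modu{f(z)-f(0)}\le\beta_f\,\rho(z,0)\le\rho(z,0)$. Your self-contained derivation buys independence from that citation at the cost of reproving its easy half.
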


From Theorems 3.9 and 3.14 of \cite{Zhu:2004}, it follows that for $z \in \B_n$,
\begin{equation}\label{omegaequalityball}\omega_0(z) = \omega(z) = \rho(z,0) = \frac{1}{2}\log\frac{1+\norm{z}}{1-\norm{z}}.\end{equation}

It is not known if there are other bounded homogeneous domains for which equality holds.  However, for the unit polydisk it was shown in \cite{AllenColonna:2010} that for $z \in \D^n$, \begin{equation}\label{rhopolydisk}\rho(z,0) \leq \frac{1}{2}\sum_{j=1}^n \log\frac{1+\modu{z_j}}{1-\modu{z_j}}.\end{equation}

The quantities $\omega(z)$ and $\omega_0(z)$ play an important role in the theory of Bloch functions on bounded homogeneous domains, which is exercised through the use of the next Lemma.

\begin{lemma}[Lemma 4.2 of \cite{AllenColonna:2010}]\label{pointevaluationestimate} Let $D$ be a bounded homogeneous domain in $\C^n$ and let $f \in \Bloch(D)$ (respectively, $f \in \lilBlochDist(D)$).  Then for all $z \in D$, we have $$\modu{f(z)} \leq \modu{f(0)} + \omega(z)\beta_f,$$ (respectively, $\modu{f(z)} \leq \modu{f(0)} + \omega_0(z)\beta_f)$.
\end{lemma}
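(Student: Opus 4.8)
\medskip

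The plan is to reduce the estimate directly to the definitions of $\omega(z)$ and $\omega_0(z)$ by a normalization of $f$. The only structural fact needed is that $Q_f$, and hence the seminorm $\beta_f$, is unchanged when a constant is added to $f$ and scales linearly under multiplication by a scalar; both facts are immediate from $\nabla(f+c)(z) = \nabla(f)(z)$ and $\nabla(\lambda f)(z) = \lambda\nabla(f)(z)$.

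First I would dispose of the degenerate case $\beta_f = 0$. Then $Q_f \equiv 0$, which forces $\nabla(f)(z) = 0$ for every $z \in D$ because the Bergman metric $H_z$ is positive definite, so $f$ is constant on the connected domain $D$ and the claimed inequality is just $\modu{f(0)} \leq \modu{f(0)}$.

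Assuming $\beta_f > 0$, I would set $g = (f - f(0))/\beta_f$. Then $g \in H(D)$, $g(0) = 0$, and $Q_g = Q_f/\beta_f$, so $\beta_g = 1$ and $\blochnorm{g} = \modu{g(0)} + \beta_g = 1$. Hence $g$ is an admissible competitor in the supremum defining $\omega(z)$, so $\modu{g(z)} \leq \omega(z)$ for all $z \in D$; multiplying through by $\beta_f$ and using $\modu{f(z)} \leq \modu{f(0)} + \modu{f(z) - f(0)}$ gives $\modu{f(z)} \leq \modu{f(0)} + \omega(z)\beta_f$. For the $*$-little Bloch statement the same $g$ does the job: from $Q_g = Q_f/\beta_f$ and $\lim_{z \to \distbd{D}} Q_f(z) = 0$ we get $\lim_{z \to \distbd{D}} Q_g(z) = 0$, so $g \in \lilBlochDist(D)$ is admissible in the supremum defining $\omega_0(z)$, and the identical computation yields $\modu{f(z)} \leq \modu{f(0)} + \omega_0(z)\beta_f$. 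I do not expect a genuine obstacle here; the only points that need a moment's care are the invariance of $Q_f$ under adding constants and the verification that the normalized competitor $g$ stays in the correct (little) Bloch class.
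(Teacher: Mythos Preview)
Your argument is correct: the normalization $g = (f - f(0))/\beta_f$ produces an admissible competitor in the supremum defining $\omega(z)$ (respectively $\omega_0(z)$), and the degenerate case $\beta_f = 0$ is handled properly. Note that the present paper does not actually prove this lemma---it is quoted from \cite{AllenColonna:2010} without argument---so there is no in-paper proof to compare against; your direct-from-definition approach is the natural one and almost certainly what the cited source does as well.
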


\begin{lemma}[Theorem 3.1 of \cite{AllenColonna:2009}]\label{bergmancharacterization} Let $D$ be a bounded homogeneous domain in $\C^n$ and let $f:D \to \C$ be holomorphic.  Then $f$ is Bloch if and only if  $f$ is a Lipschitz map as a function from $D$ under the Poincar\'e metric $\rho$ and the complex plane under the Euclidean metric.  Furthermore $$\beta_f = \sup_{z \neq w}\; \frac{\modu{f(z)-f(w)}}{\rho(z,w)}.$$
\end{lemma}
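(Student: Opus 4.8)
The plan is to prove the two inequalities
$$\sup_{z \neq w}\; \frac{\modu{f(z)-f(w)}}{\rho(z,w)} \leq \beta_f \qquad\text{and}\qquad \beta_f \leq \sup_{z \neq w}\; \frac{\modu{f(z)-f(w)}}{\rho(z,w)},$$
which together give the equivalence (each side is finite precisely when the other is) and the stated formula for $\beta_f$. Two standard facts are used throughout. First, $\rho$ is the \emph{integrated} Bergman metric, $\rho(z,w) = \inf_\gamma \int_0^1 H_{\gamma(t)}(\gamma'(t),\conj{\gamma'(t)})^{1/2}\,dt$ with the infimum over piecewise $C^1$ curves $\gamma\colon[0,1]\to D$ from $z$ to $w$; since $D$ is bounded its Bergman metric is a genuine smooth positive-definite Hermitian form, so $\rho$ is a true distance, positive on distinct points. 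Second, $Q_f(z)$ is exactly the operator norm of the $\C$-linear functional $u\mapsto\nabla(f)(z)u$ measured against the norm $u\mapsto H_z(u,\conj u)^{1/2}$ on $\C^n$, so $\modu{\nabla(f)(z)u}\leq Q_f(z)\,H_z(u,\conj u)^{1/2}$ for all $u$.

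For the first inequality I would fix $f\in\Bloch(D)$ and $z,w\in D$, pick an arbitrary admissible curve $\gamma$ joining them, and use the chain rule together with the fundamental theorem of calculus to write $f(w)-f(z)=\int_0^1\nabla(f)(\gamma(t))\,\gamma'(t)\,dt$; bounding the integrand pointwise gives
$$\modu{f(w)-f(z)}\leq\int_0^1 Q_f(\gamma(t))\,H_{\gamma(t)}(\gamma'(t),\conj{\gamma'(t)})^{1/2}\,dt\leq\beta_f\int_0^1 H_{\gamma(t)}(\gamma'(t),\conj{\gamma'(t)})^{1/2}\,dt,$$
and taking the infimum over $\gamma$ yields $\modu{f(w)-f(z)}\leq\beta_f\,\rho(z,w)$.

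For the second inequality I would suppose $L:=\sup_{z\neq w}\modu{f(z)-f(w)}/\rho(z,w)<\infty$, fix $z\in D$ and $u\in\C^n\setminus\{0\}$, and observe that for real $t$ near $0$ the segment $z+tu$ lies in $D$, so $\modu{f(z+tu)-f(z)}/\modu t\leq L\,\rho(z+tu,z)/\modu t$. As $t\to0$ the left side tends to $\modu{\nabla(f)(z)u}$ by holomorphy of $f$; granting that $\rho(z+tu,z)/\modu t\to H_z(u,\conj u)^{1/2}$, one obtains $\modu{\nabla(f)(z)u}\leq L\,H_z(u,\conj u)^{1/2}$ for every $u$, hence $Q_f(z)\leq L$, and taking the supremum over $z$ gives $\beta_f\leq L$.

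I expect the main obstacle to be exactly that last infinitesimal comparison, namely that the integrated Bergman distance recovers, to first order in $t$, the infinitesimal length $H_z(u,\conj u)^{1/2}$ of the tangent vector $u$ at $z$. The upper bound $\limsup_{t\to0}\rho(z+tu,z)/\modu t\leq H_z(u,\conj u)^{1/2}$ is routine: use the straight segment from $z$ to $z+tu$ as a competing curve and invoke continuity of $s\mapsto H_{z+su}(u,\conj u)^{1/2}$ at $s=0$. The matching lower bound is the delicate point: on a small closed Euclidean ball about $z$ the form $H$ is uniformly $(1\pm\varepsilon)$-comparable to the constant form $H_z$ (continuity and positive-definiteness on a compact set), so any competing curve leaving the ball is already too long while any competitor staying inside has $H$-length at least $(1-\varepsilon)$ times the $H_z$-length of the segment it subtends; shrinking the ball forces $\liminf_{t\to0}\rho(z+tu,z)/\modu t\geq H_z(u,\conj u)^{1/2}$. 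This is the standard identification of a smooth length metric with its infinitesimal form and can be quoted from the literature on the Bergman metric (e.g.\ \cite{Krantz:2000}); homogeneity and boundedness of $D$ ensure $H$ is legitimately smooth and positive-definite, so the comparison is valid.
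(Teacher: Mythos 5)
The paper does not prove this lemma---it is imported verbatim as Theorem~3.1 of the cited Allen--Colonna reference---and your argument is the standard proof of that result: the inequality $\modu{f(z)-f(w)}\leq\beta_f\,\rho(z,w)$ by integrating $\nabla(f)$ along competing curves, and the reverse inequality by differentiating the Lipschitz bound along a segment. One small remark: the ``delicate'' lower bound $\liminf_{t\to 0}\rho(z+tu,z)/\modu{t}\geq H_z(u,\conj{u})^{1/2}$ that you single out as the main obstacle is not actually needed, since in the estimate $\modu{\nabla(f)(z)u}\leq L\,\limsup_{t\to 0}\rho(z+tu,z)/\modu{t}$ only the routine upper bound (via the straight segment and continuity of $H$) enters; the other direction of the equivalence is already covered by the integral argument.
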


In particular, we have the following corollary.

\begin{corollary}\label{bergmanestimate} Let $D$ be a bounded homogeneous domain in $\C^n$ and $f \in \Bloch(D)$.  Then for all $z,w \in D$, $$\modu{f(z)-f(w)} \leq \norm{f}_\Bloch\rho(z,w).$$
\end{corollary}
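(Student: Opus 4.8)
The plan is to read this off almost immediately from the Lipschitz characterization in Lemma \ref{bergmancharacterization}. First I would dispose of the trivial case $z=w$, where both sides of the claimed inequality vanish. For $z\neq w$, Lemma \ref{bergmancharacterization} asserts that
\[
\beta_f=\sup_{z\neq w}\;\frac{\modu{f(z)-f(w)}}{\rho(z,w)},
\]
so in particular $\modu{f(z)-f(w)}\leq \beta_f\,\rho(z,w)$ for every such pair. It then remains to pass from $\beta_f$ to the full Bloch norm. Since $\blochnorm{f}=\modu{f(0)}+\beta_f$ (with the fixed basepoint $0\in D$ chosen in Section \ref{Section:Preliminaries}) and $\modu{f(0)}\geq 0$, we have $\beta_f\leq\blochnorm{f}$; because $\rho(z,w)\geq 0$, this gives $\modu{f(z)-f(w)}\leq\blochnorm{f}\,\rho(z,w)$, as desired.

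There is essentially no obstacle here: the content is entirely in Lemma \ref{bergmancharacterization}, and the corollary is just the combination of the supremum estimate with the evident bound $\beta_f\leq\blochnorm{f}$. The only point worth a moment's care is that the Poincar\'e distance $\rho$ appearing in Lemma \ref{bergmancharacterization} is the same invariant metric (up to the usual normalization) underlying the quantity $Q_f$ and hence $\beta_f$, so no rescaling constant intrudes; this is already built into the statement of that lemma, so the argument is complete as written.
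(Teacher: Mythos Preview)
Your proof is correct and is exactly the argument the paper intends: the corollary is stated without proof immediately after Lemma~\ref{bergmancharacterization}, and follows from that lemma by bounding $\modu{f(z)-f(w)}\leq\beta_f\,\rho(z,w)$ and then using $\beta_f\leq\blochnorm{f}$.
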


\begin{lemma}[Theorem 3.3 of \cite{AllenColonna:2009}]\label{convergenceintoBloch} Let $(f_n)$ be a sequence of Bloch functions on a bounded homogeneous domain $D$ in $\C^n$ which converges locally uniformly in $D$ to some holomorphic function $f$.  If the sequence $(\beta_{f_n})$ is bounded, then $f$ if Bloch and $$\beta_f \leq \liminf_{n \to \infty} \beta_{f_n}.$$  That is, the function $f \mapsto \beta_f$ is lower semi-continuous on $\Bloch$ under the topology of uniform convergence on compact subsets of $D$.
\end{lemma}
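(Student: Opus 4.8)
The plan is to deduce the lower semicontinuity directly from the metric characterization of the Bloch seminorm in Lemma~\ref{bergmancharacterization}, which replaces the gradient quantity $Q_f$ by difference quotients $\modu{f(z)-f(w)}/\rho(z,w)$ — a quantity that passes cleanly to limits under pointwise (hence locally uniform) convergence of $(f_n)$.

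First, since $(\beta_{f_n})$ is assumed bounded, $L := \liminf_{n\to\infty}\beta_{f_n}$ is a finite nonnegative real number. Fix two distinct points $z,w\in D$. By Lemma~\ref{bergmancharacterization} applied to each $f_n$ (which is Bloch by hypothesis), $\modu{f_n(z)-f_n(w)} \le \beta_{f_n}\,\rho(z,w)$ for every $n$. Given $\varepsilon>0$, the definition of $\liminf$ yields a subsequence $(n_k)$ with $\beta_{f_{n_k}} < L+\varepsilon$; along this subsequence $f_{n_k}(z)\to f(z)$ and $f_{n_k}(w)\to f(w)$, because $f_n\to f$ locally uniformly, so letting $k\to\infty$ gives $\modu{f(z)-f(w)} \le (L+\varepsilon)\rho(z,w)$. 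Letting $\varepsilon\downarrow 0$ produces $\modu{f(z)-f(w)} \le L\,\rho(z,w)$.

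Since $z,w$ were arbitrary, $f$ is Lipschitz from $(D,\rho)$ into $(\C,\modu{\cdot})$ with constant at most $L<\infty$. The function $f$ is holomorphic (being a locally uniform limit of holomorphic functions), so by the implication in Lemma~\ref{bergmancharacterization} that a holomorphic $\rho$-Lipschitz function is Bloch, we get $f\in\Bloch(D)$, and the supremum formula in the same lemma gives $\beta_f = \sup_{z\ne w}\dfrac{\modu{f(z)-f(w)}}{\rho(z,w)} \le L = \liminf_{n\to\infty}\beta_{f_n}$, which is precisely the claimed lower semicontinuity.

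I do not expect a serious obstacle; the only points needing a little care are that one must argue along a $\liminf$-realizing subsequence rather than assume $(\beta_{f_n})$ converges, and that one invokes the full equivalence in Lemma~\ref{bergmancharacterization} (holomorphic plus $\rho$-Lipschitz implies Bloch), not merely its supremum formula. As an alternative that avoids Lemma~\ref{bergmancharacterization}: by the Weierstrass theorem the partials $\partial f_n/\partial z_k$ converge to $\partial f/\partial z_k$ locally uniformly, so for each fixed $z$ the ratio $\modu{\nabla(f_n)(z)u}/H_z(u,\conj u)^{1/2}$ converges uniformly in $u$ and hence $Q_{f_n}(z)\to Q_f(z)$; then $Q_f(z)=\lim_k Q_{f_{n_k}}(z)\le\liminf_n\beta_{f_n}$ for a suitable subsequence, and taking the supremum over $z\in D$ concludes. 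I would present the first argument as the main one, since it is shorter and reuses machinery already recorded above.
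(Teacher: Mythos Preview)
Your argument is correct. The paper does not supply its own proof of this lemma; it is quoted verbatim as Theorem~3.3 of \cite{AllenColonna:2009}, so there is no in-paper proof to compare against. Your main approach is self-contained relative to the present paper, since it rests only on Lemma~\ref{bergmancharacterization}, and the alternative derivative-based argument you sketch is likewise valid. One small streamlining: you do not need to extract a subsequence and then let $\varepsilon\downarrow 0$; since $f_n(z)\to f(z)$ and $f_n(w)\to f(w)$, you have directly
\[
\modu{f(z)-f(w)}=\lim_{n\to\infty}\modu{f_n(z)-f_n(w)}=\liminf_{n\to\infty}\modu{f_n(z)-f_n(w)}\le\liminf_{n\to\infty}\beta_{f_n}\,\rho(z,w),
\]
which already gives the Lipschitz bound with constant $L$.
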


\section{Boundedness and Operator Norm}\label{Section:Boundedness}
In this section, we characterize the bounded weighted composition operators, and determine the operator norm, from $\Bloch(D)$ to $\Hinfmu(D)$ in terms of the following quantities.  For $\psi \in H(D)$ and $\varphi \in S(D)$, define
$$\begin{aligned}
\upsilon_\mu(\psi,\varphi) &= \sup_{z \in D}\; \mu(z)\modu{\psi(z)}\omega(\varphi(z)), \text{ and}\\
\upsilon_{0,\mu}(\psi,\varphi) &= \sup_{z \in D}\; \mu(z)\modu{\psi(z)}\omega_0(\varphi(z)).
\end{aligned}$$

\begin{lemma}\label{upsilon inequality} Let $D$ be a bounded homogeneous domain in $\C^n$, $\psi \in H(D)$, and $\varphi \in S(D)$.  If $\wco:\Bloch(D)\to\Hinfmu(D)$ is bounded, then $$\upsilon_{0,\mu}(\psi,\varphi) \leq \upsilon_\mu(\psi,\varphi) \leq \norm{\wco}.$$
\end{lemma}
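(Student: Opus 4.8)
The inequality $\upsilon_{0,\mu}(\psi,\varphi) \leq \upsilon_\mu(\psi,\varphi)$ is immediate from $\omega_0(z) \leq \omega(z)$ for all $z \in D$, which is Lemma~\ref{omega inequality}. So the substance is the bound $\upsilon_\mu(\psi,\varphi) \leq \norm{\wco}$. The natural approach is to test the operator on the extremal functions defining $\omega$. Fix $z \in D$. For any $\varepsilon > 0$, by definition of $\omega(\varphi(z))$ there is a function $f \in \Bloch(D)$ with $f(0) = 0$, $\blochnorm{f} \leq 1$, and $\modu{f(\varphi(z))} > \omega(\varphi(z)) - \varepsilon$. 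Then $\wco f = \psi\,(f \circ \varphi)$ lies in $\Hinfmu(D)$, and
\begin{equation}
\nonumber
\norm{\wco} \geq \norm{\wco}\,\blochnorm{f} \geq \munorm{\wco f} = \sup_{w \in D}\; \mu(w)\modu{\psi(w)}\modu{f(\varphi(w))} \geq \mu(z)\modu{\psi(z)}\modu{f(\varphi(z))} > \mu(z)\modu{\psi(z)}\bigl(\omega(\varphi(z)) - \varepsilon\bigr).
\end{equation}
Letting $\varepsilon \to 0$ gives $\norm{\wco} \geq \mu(z)\modu{\psi(z)}\omega(\varphi(z))$, and since $z \in D$ was arbitrary, taking the supremum over $z$ yields $\upsilon_\mu(\psi,\varphi) \leq \norm{\wco}$.

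**The main obstacle.** The only delicate point is that the extremal functions $f$ depend on $z$, so one cannot expect a single test function to work; the argument must be run pointwise and the supremum taken only at the end, which is why I phrase it with an arbitrary $\varepsilon$ and an arbitrary fixed $z$. One should also confirm that $\wco f \in \Hinfmu(D)$ so that the middle inequality makes sense: this follows because $\wco$ is assumed bounded from $\Bloch(D)$ into $\Hinfmu(D)$, so $\wco$ maps the unit ball of $\Bloch(D)$ into $\Hinfmu(D)$ with $\munorm{\wco f} \leq \norm{\wco}$. No further machinery is needed; the homogeneity of the domain enters only through the finiteness of $\omega$ and $\omega_0$ guaranteed by Lemma~\ref{omega inequality}.
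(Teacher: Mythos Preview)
Your proof is correct and follows essentially the same approach as the paper: both fix $z\in D$, test $\wco$ on functions $f$ in the unit ball of $\Bloch(D)$ with $f(0)=0$ to recover $\omega(\varphi(z))$, and then take the supremum over $z$. The only cosmetic difference is that the paper takes the supremum over such $f$ directly, whereas you phrase it via an $\varepsilon$-approximation; these are equivalent (and as a minor nitpick, your strict inequality at the end should be weak when $\psi(z)=0$, which is harmless).
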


\begin{proof} The first inequality is a corollary of Lemma \ref{omega inequality}.  So it suffices to show that $\upsilon_\mu(\psi,\varphi) \leq \norm{\wco}$.  Let $f \in \Bloch(D)$ with $\blochnorm{f} \leq 1$.  For every $z \in D$, we have
$$\norm{\wco} \geq \munorm{\psi(f\circ \varphi)} \geq \mu(z)\modu{\psi(z)}\modu{f(\varphi(z))}.$$  Taking the supremum over all such $f \in \Bloch(D)$ such that $f(0) = 0$, we obtain $$\mu(z)\modu{\psi(z)}\omega(\varphi(z)) \leq \norm{\wco}.$$  Finally, taking the supremum over all $z \in D$, we have $\upsilon_\mu(\psi,\varphi) \leq \norm{\wco}$, as desired.
\end{proof}

\begin{theorem}\label{boundednesstheorem} Let $D$ be a bounded homogeneous domain in $\C^n$, $\psi \in H(D)$, and $\varphi$ a holomorphic self-map of $D$.  Then
\begin{enumerate}
\item[\listitem{a}] $\wco:\Bloch(D)\to\Hinfmu(D)$ is bounded if and only if $\psi \in \Hinfmu(D)$ and $\upsilon_\mu(\psi,\varphi)$ is finite.  Furthermore, if $\wco$ is bounded, then $\norm{\wco} = \max\;\left\{\munorm{\psi},\upsilon_\mu(\psi,\varphi)\right\}.$
\item[\listitem{b}] $\wco:\lilBlochDist(D)\to\Hinfmu(D)$ is bounded if and only if $\psi \in \Hinfmu(D)$ and $\upsilon_{0,\mu}(\psi,\varphi)$ is finite.  Furthermore, if $\wco$ is bounded, then $\norm{\wco} = \max\;\left\{\munorm{\psi},\upsilon_{0,\mu}(\psi,\varphi)\right\}.$
\end{enumerate}
\end{theorem}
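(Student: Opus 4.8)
The plan is to prove (a) and (b) in parallel, since the arguments are essentially identical after replacing $\omega$ by $\omega_0$ and $\Bloch(D)$ by $\lilBlochDist(D)$; I will write (a) in detail and indicate the modifications for (b). First I establish the \emph{necessity} of the two conditions. If $\wco:\Bloch(D)\to\Hinfmu(D)$ is bounded, then $\upsilon_\mu(\psi,\varphi)\leq\norm{\wco}<\infty$ is immediate from Lemma \ref{upsilon inequality}. For $\psi\in\Hinfmu(D)$, apply $\wco$ to the constant function $\1$, which lies in $\Bloch(D)$ with $\blochnorm{\1}=1$; then $\psi=\wco\1\in\Hinfmu(D)$ and $\munorm{\psi}=\munorm{\wco\1}\leq\norm{\wco}$. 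Combining these two observations already yields the lower bound $\norm{\wco}\geq\max\{\munorm{\psi},\upsilon_\mu(\psi,\varphi)\}$, which is half of the norm formula.

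Next I prove \emph{sufficiency} together with the matching upper bound $\norm{\wco}\leq\max\{\munorm{\psi},\upsilon_\mu(\psi,\varphi)\}$, which simultaneously shows boundedness and completes the norm identity. Let $f\in\Bloch(D)$ with $\blochnorm{f}\leq 1$. Fix $z\in D$. By Lemma \ref{pointevaluationestimate}, $\modu{f(\varphi(z))}\leq\modu{f(0)}+\omega(\varphi(z))\beta_f$. Multiplying by $\mu(z)\modu{\psi(z)}$ and using $\modu{f(0)}+\beta_f=\blochnorm{f}\leq 1$, I get
$$
\mu(z)\modu{\psi(z)}\modu{f(\varphi(z))}\leq \mu(z)\modu{\psi(z)}\bigl(\modu{f(0)}+\omega(\varphi(z))\beta_f\bigr)\leq \max\bigl\{\munorm{\psi},\upsilon_\mu(\psi,\varphi)\bigr\},
$$
where the last step is the elementary fact that $a\lambda+b(1-\lambda)\leq\max\{a,b\}$ for $\lambda=\modu{f(0)}\in[0,1]$, applied with $a=\mu(z)\modu{\psi(z)}\leq\munorm{\psi}$ and $b=\mu(z)\modu{\psi(z)}\omega(\varphi(z))\leq\upsilon_\mu(\psi,\varphi)$. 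Taking the supremum over $z\in D$ gives $\munorm{\wco f}\leq\max\{\munorm{\psi},\upsilon_\mu(\psi,\varphi)\}$, and then the supremum over $f$ with $\blochnorm{f}\leq 1$ gives the desired upper bound on $\norm{\wco}$. In particular, if $\psi\in\Hinfmu(D)$ and $\upsilon_\mu(\psi,\varphi)<\infty$ this right-hand side is finite, so $\wco$ is bounded.

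For part (b), the necessity of $\upsilon_{0,\mu}(\psi,\varphi)\leq\norm{\wco}$ follows by the same argument as in Lemma \ref{upsilon inequality} but taking the supremum only over $f\in\lilBlochDist(D)$ with $f(0)=0$; and $\psi=\wco\1\in\Hinfmu(D)$ since $\1\in\lilBlochDist(D)$. The sufficiency and upper bound use the second (little-Bloch) inequality in Lemma \ref{pointevaluationestimate} verbatim in place of the first. The one point that deserves a brief remark is that for (b) one should confirm $\wco f\in\Hinfmu(D)$ rather than in some larger space, but this is exactly the finiteness of the supremum just bounded, so nothing extra is needed. I do not anticipate a genuine obstacle here: the result is a clean duality/extremal-function computation, and the only mild subtlety is organizing the convex-combination estimate so that the two competing quantities $\munorm{\psi}$ and $\upsilon_\mu(\psi,\varphi)$ assemble into a single maximum rather than a sum.
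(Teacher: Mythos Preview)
Your proposal is correct and follows essentially the same approach as the paper: necessity via $\1$ and Lemma \ref{upsilon inequality}, sufficiency via Lemma \ref{pointevaluationestimate} combined with $\modu{f(0)}+\beta_f=\blochnorm{f}\leq 1$. The only cosmetic difference is that you package the final algebraic step as a convex-combination bound $a\lambda+b(1-\lambda)\leq\max\{a,b\}$, whereas the paper writes out the equivalent chain $\munorm{\psi}\modu{f(0)}+\upsilon_\mu(\psi,\varphi)\beta_f=\munorm{\psi}\blochnorm{f}+(\upsilon_\mu(\psi,\varphi)-\munorm{\psi})\beta_f\leq\max\{\munorm{\psi},\upsilon_\mu(\psi,\varphi)\}\blochnorm{f}$; note that your convex-combination form tacitly uses $\beta_f\leq 1-\modu{f(0)}$ (not equality), which is fine since $b\geq 0$.
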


\begin{proof} We will prove (a), since the proof of (b) follows the same argument.  First, assume $\wco$ is bounded.  Since the constant function $\1 \in \Bloch(D)$, we have $\psi = \wco \1 \in \Hinfmu(D)$.  Also $\upsilon_\mu(\psi,\varphi)$ is finite by Lemma \ref{upsilon inequality}, which also implies \begin{equation}\label{norm lower bound}\max\left\{\munorm{\psi},\upsilon_\mu(\psi,\varphi)\right\} \leq \norm{\wco}.\end{equation}

Next, assume $\psi \in \Hinfmu(D)$ and $\upsilon_\mu(\psi,\varphi)$ is finite.  By Lemma \ref{pointevaluationestimate}, for $f \in \Bloch(D)$ with $\blochnorm{f} \leq 1$, and $z \in D$,
$$\begin{aligned}
\munorm{\wco f} &= \sup_{z \in D}\; \mu(z)\modu{\psi(z)}\modu{f(\varphi(z))}\\
&\leq \sup_{z \in D}\; \mu(z)\modu{\psi(z)}\left(\modu{f(0)} + \omega(\varphi(z))\beta_f\right)\\
&\leq \munorm{\psi}\modu{f(0)} + \upsilon_\mu(\psi,\varphi)\beta_f\\
&\leq \munorm{\psi}\left(\blochnorm{f}-\beta_f\right) + \upsilon_\mu(\psi,\varphi)\beta_f\\
&\leq \munorm{\psi}\blochnorm{f} + \left(\upsilon_\mu(\psi,\varphi)-\munorm{\psi}\right)\beta_f\\
&\leq \max\left\{\munorm{\psi},\upsilon_\mu(\psi,\varphi)\right\}\blochnorm{f}.
\end{aligned}$$
Thus, $\wco$ is bounded, and taking the supremum over all $f \in \Bloch(D)$ with $\blochnorm{f} \leq 1$, we obtain $$\norm{\wco} \leq \max\left\{\munorm{\psi},\upsilon_\mu(\psi,\varphi)\right\},$$ as desired.
\end{proof}

\section{Compactness}\label{Section:Compactness}
In this section, we characterize the compact weighted composition operators from $\Bloch(D)$ into $\Hinfmu(D)$.  First, we provide necessary and sufficient conditions in terms of the classical convergence of bounded sequences.  Lastly, we provide sufficient conditions for the operator to be compact in terms of the ``little-oh" condition induced by the symbols.

\begin{proposition}\label{compactnesscharacterization} Let $D$ be a bounded homogeneous domain in $\C^n$, $\psi \in H(D)$, and $\varphi \in S(D)$.  Then $\wco: \Bloch(D) \to \Hinfmu(D)$ is compact if and only if for every bounded sequence $(f_k)$ in $\Bloch(D)$ converging to 0 locally uniformly in $D$, the sequence $(\munorm{\psi(f_k\circ\varphi)})$ converges to 0 as $k \to \infty$.
\end{proposition}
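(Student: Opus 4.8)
The plan is to prove the standard characterization of compactness for operators into a weighted Banach space, which rests on the fact that the identity map $\Bloch(D) \to H(D)$ (with the topology of local uniform convergence) sends bounded sets to relatively compact sets, and that norm-convergence in $\Hinfmu(D)$ of a sequence that already converges locally uniformly to $0$ is exactly the conclusion we want.

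\medskip

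\textit{Sufficiency.} Suppose the stated sequential condition holds, and let $(f_k)$ be any bounded sequence in $\Bloch(D)$, say $\blochnorm{f_k} \le M$ for all $k$. I would first show that $(f_k)$ has a subsequence converging locally uniformly on $D$ to some $g \in \Bloch(D)$. This follows from a normal families argument: by Lemma \ref{pointevaluationestimate}, $\modu{f_k(z)} \le M(1 + \omega(z))$ for all $z$, so $(f_k)$ is uniformly bounded on compact subsets of $D$ (since $\omega$ is finite, hence locally bounded, by Lemma \ref{omega inequality}); Montel's theorem then yields a locally uniformly convergent subsequence $(f_{k_j})$ with limit $g \in H(D)$, and Lemma \ref{convergenceintoBloch} gives $g \in \Bloch(D)$ with $\beta_g \le M$ (and $\blochnorm{g} \le \liminf \blochnorm{f_{k_j}} \le M$ after accounting for the value at $0$). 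Now set $h_j = f_{k_j} - g$. Then $(h_j)$ is bounded in $\Bloch(D)$ (by $2M$) and converges to $0$ locally uniformly, so by hypothesis $\munorm{\psi(h_j \circ \varphi)} \to 0$, i.e. $\wco f_{k_j} \to \wco g$ in $\Hinfmu(D)$. Thus every bounded sequence in $\Bloch(D)$ has a subsequence whose image under $\wco$ converges in $\Hinfmu(D)$, which is precisely compactness of $\wco$. (Note $\wco$ is automatically bounded here, since a compactness-type hypothesis forces $\munorm{\wco \1} = \munorm{\psi} < \infty$ and, via Lemma \ref{upsilon inequality}'s style of argument applied to the hypothesis, the relevant quantities are controlled — but this is subsumed by the subsequence argument itself.)

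\medskip

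\textit{Necessity.} Suppose $\wco$ is compact, and let $(f_k)$ be a bounded sequence in $\Bloch(D)$ converging to $0$ locally uniformly. I want to show $\munorm{\psi(f_k \circ \varphi)} \to 0$. Argue by contradiction: if not, there is $\varepsilon > 0$ and a subsequence $(f_{k_j})$ with $\munorm{\psi(f_{k_j} \circ \varphi)} \ge \varepsilon$ for all $j$. By compactness of $\wco$, passing to a further subsequence we may assume $\wco f_{k_j} \to g$ in $\Hinfmu(D)$ for some $g \in \Hinfmu(D)$. Now I would show $g \equiv 0$: convergence in $\Hinfmu(D)$ implies pointwise convergence (since $\mu$ is strictly positive and continuous, so $\modu{F(z)} \le \munorm{F}/\mu(z)$), hence $\psi(z) f_{k_j}(\varphi(z)) \to g(z)$ for every $z \in D$; but $f_{k_j} \to 0$ locally uniformly and $\varphi(z) \in D$, so $f_{k_j}(\varphi(z)) \to 0$, giving $g(z) = 0$ for all $z$. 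Therefore $\munorm{\wco f_{k_j}} = \munorm{\wco f_{k_j} - g} \to 0$, contradicting $\munorm{\psi(f_{k_j} \circ \varphi)} \ge \varepsilon$. Hence $\munorm{\psi(f_k \circ \varphi)} \to 0$.

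\medskip

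The main obstacle — and the only genuinely nontrivial input — is the normal families / compactness step in the sufficiency direction: one must know that bounded subsets of $\Bloch(D)$ are relatively compact in $H(D)$ under local uniform convergence, which requires the local boundedness of $\omega$ from Lemma \ref{omega inequality} together with Montel's theorem, and then the closure property from Lemma \ref{convergenceintoBloch} to keep the limit inside $\Bloch(D)$. Once that is in hand, the rest is a routine diagonal/subsequence argument plus the elementary observation that $\Hinfmu$-norm convergence dominates local uniform (indeed pointwise) convergence.
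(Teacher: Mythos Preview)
Your proof is correct and follows essentially the same approach as the paper: the sufficiency direction uses Montel's theorem (via local boundedness of Bloch functions) and Lemma~\ref{convergenceintoBloch} to extract a locally uniformly convergent subsequence, then applies the hypothesis to the difference; the necessity direction identifies the limit as zero via pointwise convergence. The only cosmetic differences are that the paper obtains local boundedness through Corollary~\ref{bergmanestimate} (after normalizing $g_k(z_0)=0$) rather than Lemma~\ref{pointevaluationestimate} with $\omega$, and your necessity argument via contradiction is in fact more carefully stated than the paper's somewhat terse version.
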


\begin{proof}
First, suppose $\wco:\Bloch(D)\to\Hinfmu(D)$ is compact.  Let $(f_k)$ be a bounded sequence in $\Bloch(D)$ which converges to 0 locally uniformly in $D$.  By the compactness of $\wco$, the sequence $(\psi(f_k\circ\varphi))$ contains a subsequence which converges to some function $f \in H(D)$.  For $z \in D$, $\mu(z)\psi(z)f_k(\varphi(z)) \to 0$ as $k \to \infty$.  Hence $f$ is identically 0, and thus $(\munorm{\psi(f_k\circ\varphi)})$ converges to 0 as $k\to \infty$.

Conversely, suppose $(\munorm{\psi(f_k\circ\varphi)})$ converges to 0 as $k \to \infty$ for every bounded sequence $(f_k)$ in $\Bloch(D)$ converging to 0 locally uniformly in $D$.  Let $(g_k)$ be a bounded sequence in $\Bloch(D)$, and without loss of generality assume $\norm{g_k}_\Bloch \leq 1$ for each $k \in \N$.  To prove $\wco$ is compact, it suffices to show there exists a subsequence $(g_{k_j})$ for which $(\psi(g_{k_j}\circ\varphi))$ converges in $\Hinfmu(D)$.  Fix $z_0 \in D$, and without loss of generality assume $g_k(z_0) = 0$ for all $k \in \N$.  By Corollary \ref{bergmanestimate}, $\modu{g_k(z)} \leq \rho(z,z_0)$ for all $z \in D$.  Thus $(g_k)$ is uniformly bounded on every closed disk centered at $z_0$ in the Poincar\'e metric, and thus is uniformly bounded on compact subsets of $D$.  By Montel's Theorem (see \cite{Scheidemann:2005}), there exists a subsequence $(g_{k_j})$ which converges locally uniformly to a function $g \in H(D)$.  By Lemma \ref{convergenceintoBloch}, $g \in \Bloch(D)$ with $\blochnorm{g} \leq 1$.  Defining $h_{k_j} = g_{k_j} - g$, we see that $\blochnorm{h_{k_j}} \leq 2$, and thus $(h_{k_j})$ is a bounded sequence in $\Bloch(D)$ converging to 0 locally uniformly in $D$.  Thus $(\munorm{\psi(h_{k_j}\circ\varphi)})$ converges to 0 as $k \to \infty$.  Therefore $(\psi(g_{k_j}\circ\varphi))$ converges to $\psi(g\circ\varphi)$ in $\Hinfmu(D)$.
\end{proof}

Although this is a characterization of the compact weighted composition operators from $\Bloch(D)$ into $\Hinfmu(D)$, we wish to have such a characterization in terms of the symbols $\psi$ and $\varphi$.  For a general bounded homogeneous domain, we can determine sufficient conditions for compactness.  We are unable to obtain a complete characterization due to the presence of the $\omega(z)$ term.  For the unit ball, we have a closed form for $\omega(z)$, and thus a complete characterization can be obtained.  However, in general, this term can not be removed, and thus we obtain a roadblock to obtaining complete characterizations of the weighted composition operators on bounded homogeneous domains.

\begin{theorem}\label{compactnesssufficiency} Let $D$ be a bounded homogeneous domain in $\C^n$, $\psi \in H(D)$, and $\varphi \in S(D)$.  Then $\wco:\Bloch(D)\to\Hinfmu(D)$ is compact if $\psi \in \Hinfmu(D)$ and $$\lim_{\varphi(z) \to \partial D} \frac{1}{2}\mu(z)\modu{\psi(z)}\omega(\varphi(z)) = 0.$$
\end{theorem}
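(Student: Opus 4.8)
The plan is to verify the sequential compactness criterion of Proposition~\ref{compactnesscharacterization}. So I would take a bounded sequence $(f_k)$ in $\Bloch(D)$ converging to $0$ locally uniformly on $D$; since the numerical constant in the hypothesis is immaterial (the limit being $0$ is unaffected by it) and since $\munorm{\psi(f_k\circ\varphi)}$ scales linearly in $f_k$, I may assume $\blochnorm{f_k}\le 1$ for all $k$, and the goal becomes $\munorm{\psi(f_k\circ\varphi)}\to 0$.

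Fix $\varepsilon>0$. The hypothesis says precisely that there is a compact set $K\subset D$ with $\mu(z)\modu{\psi(z)}\omega(\varphi(z))<2\varepsilon$ for every $z\in D$ such that $\varphi(z)\notin K$. I would then split
$$\munorm{\psi(f_k\circ\varphi)}=\max\{A_k,B_k\},$$
where $A_k$ (resp.\ $B_k$) is the supremum of $\mu(z)\modu{\psi(z)}\modu{f_k(\varphi(z))}$ over those $z\in D$ with $\varphi(z)\in K$ (resp.\ $\varphi(z)\notin K$), with the convention that a supremum over the empty set is $0$; as $D\neq\emptyset$ this is a genuine dichotomy.

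For $A_k$: because $\psi\in\Hinfmu(D)$ we have $\mu(z)\modu{\psi(z)}\le\munorm{\psi}$, so $A_k\le\munorm{\psi}\sup_{w\in K}\modu{f_k(w)}$, which tends to $0$ since $(f_k)$ converges to $0$ \emph{uniformly} on the compact set $K$. For $B_k$: Lemma~\ref{pointevaluationestimate} gives $\modu{f_k(\varphi(z))}\le\modu{f_k(0)}+\omega(\varphi(z))\beta_{f_k}$; multiplying by $\mu(z)\modu{\psi(z)}$, using $\beta_{f_k}\le\blochnorm{f_k}\le 1$ together with the defining property of $K$, I obtain for every $z$ with $\varphi(z)\notin K$
\begin{align*}
\mu(z)\modu{\psi(z)}\modu{f_k(\varphi(z))}
&\le \munorm{\psi}\modu{f_k(0)}+\mu(z)\modu{\psi(z)}\omega(\varphi(z))\\
&< \munorm{\psi}\modu{f_k(0)}+2\varepsilon,
\end{align*}
hence $B_k\le\munorm{\psi}\modu{f_k(0)}+2\varepsilon$. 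Since $\modu{f_k(0)}\to 0$, combining the two estimates gives $\limsup_k\munorm{\psi(f_k\circ\varphi)}\le 2\varepsilon$, and as $\varepsilon>0$ was arbitrary the criterion of Proposition~\ref{compactnesscharacterization} is satisfied, so $\wco$ is compact.

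The argument is essentially routine; the only places asking for a little care are the translation of the boundary condition \qte{$\lim_{\varphi(z)\to\partial D}(\cdot)=0$} into the compact-exhaustion statement about $K$, and the use of the fact (Montel, as in Proposition~\ref{compactnesscharacterization}) that locally uniform convergence on $D$ is uniform on compact subsets, which is what forces the $A_k$ term to vanish. There is no serious obstacle here, in line with the paper's own remark: the barrier to a converse is the absence of a closed form for $\omega$, not the compactness estimate itself.
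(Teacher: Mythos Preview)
Your proof is correct and follows essentially the same route as the paper's: invoke Proposition~\ref{compactnesscharacterization}, split according to whether $\varphi(z)$ lies in a compact set or near the boundary, handle the compact part via $\psi\in\Hinfmu(D)$ and uniform convergence, and handle the boundary part via Lemma~\ref{pointevaluationestimate}. The only cosmetic differences are that the paper normalizes by assuming $f_k(0)=0$ (whereas you carry the $\modu{f_k(0)}$ term and let it vanish), and the paper phrases the boundary region via $\rho(\varphi(z),\partial D)<r$ rather than your compact-exhaustion set $K$; your formulation is arguably cleaner.
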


\begin{proof}
Suppose $\psi \in \Hinfmu(D)$ and $\lim_{\varphi(z) \to \partial D} \frac{1}{2}\mu(z)\modu{\psi(z)}\omega(\varphi(z)) = 0$.  It suffices to show that for any bounded sequence $(f_k)$ in $\Bloch(D)$ converging to 0 locally uniformly in $D$, then $(\munorm{\psi(f_k\circ\varphi)})$ converges to 0 as $k \to \infty$.  Let $(f_k)$ be such a sequence, and without loss of generality assume $f_k(0) = 0$ for all $k \in \N$.  For $\varepsilon > 0$, there exists $r > 0$ such that $\mu(z)\modu{\psi(z)}\omega(\varphi(z)) < \varepsilon$ whenever $\rho(\varphi(z),\partial D) < r$.  So, if $\rho(\varphi(z),\partial D) < r$, then for $k$ large enough, $$\frac{1}{2}\mu(z)\modu{\psi(z)}\modu{f_k(\varphi(z))} \leq \frac{1}{2}\mu(z)\modu{\psi(z)}\omega(\varphi(z)) < \varepsilon.$$

Since $(f_k)$ converges to 0 locally uniformly in $D$, if $\rho(\varphi(z),\partial D) \geq r$ then $\modu{f_k(\varphi(z))} \to 0$ as $k \to \infty$.  So for $k$ large enough, $\modu{f_k(\varphi(z))} < \frac{\varepsilon}{2\munorm{\psi}}$ and $$\frac{1}{2}\mu(z)\modu{\psi(z)}\modu{f_k(\varphi(z))} \leq\ \munorm{\psi}\modu{f_k(\varphi(z))} < \varepsilon,$$ whenever $\rho(\varphi(z),\partial D) \geq r$.  Thus, for $k$ large enough, $\frac{1}{2}\mu(z)\modu{\psi(z)}\modu{f_k(\varphi(z))} < \varepsilon$ for all $z \in D$.  Therefore, $(\munorm{\psi(f_k\circ\varphi)})$ converges to 0 as $k \to \infty$, completing the proof.
\end{proof}

As was the case for such operators acting on the Bloch space \cite{AllenColonna:2010}, more information about $\omega(z)$ on these domains is needed to prove the necessity of these conditions.  However, we believe this to be the case, as the next section offers some hope.  We end this section with the following conjecture.

\begin{conjecture}\label{conjecture} Let $D$ be a bounded homogeneous domain in $\C^n$, $\psi \in H(D)$, and $\varphi$ a holomorphic self-map of $D$.  Then $\wco:\Bloch(D)\to\Hinfmu(D)$ is compact if and only if $\psi \in \Hinfmu(D)$ and $$\lim_{\varphi(z) \to \partial D}\frac{1}{2} \mu(z)\modu{\psi(z)}\omega(\varphi(z)) = 0.$$\end{conjecture}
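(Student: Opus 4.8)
\emph{A proposed approach to Conjecture~\ref{conjecture}.}
The forward implication is exactly Theorem~\ref{compactnesssufficiency} (the factor $\tfrac12$ is immaterial, since $\tfrac12 t\to0$ if and only if $t\to0$), so the substance is the converse: if $\wco\colon\Bloch(D)\to\Hinfmu(D)$ is compact, then $\psi\in\Hinfmu(D)$ and $\lim_{\varphi(z)\to\partial D}\mu(z)\modu{\psi(z)}\omega(\varphi(z))=0$. That $\psi\in\Hinfmu(D)$ is automatic, since $\1\in\Bloch(D)$ forces $\psi=\wco\1\in\Hinfmu(D)$. For the limit condition I would argue by contradiction through Proposition~\ref{compactnesscharacterization}. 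If it fails, there are $\delta>0$ and a sequence $(z_k)$ in $D$ with $\varphi(z_k)\to\partial D$ and $\mu(z_k)\modu{\psi(z_k)}\omega(\varphi(z_k))\geq\delta$; writing $a_k=\varphi(z_k)$ and using $\mu(z_k)\modu{\psi(z_k)}\leq\munorm{\psi}$, we get in addition $\omega(a_k)\geq\delta/\munorm{\psi}>0$.

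The whole matter then reduces to producing a sequence $(f_k)$ in $\Bloch(D)$ that is bounded in the Bloch norm, converges to $0$ locally uniformly on $D$, and satisfies $\modu{f_k(a_k)}\geq c\,\omega(a_k)$ for some absolute constant $c>0$. Given such $(f_k)$, Proposition~\ref{compactnesscharacterization} forces $\munorm{\psi(f_k\circ\varphi)}\to0$, whereas $\munorm{\psi(f_k\circ\varphi)}\geq\mu(z_k)\modu{\psi(z_k)}\modu{f_k(a_k)}\geq c\,\mu(z_k)\modu{\psi(z_k)}\omega(a_k)\geq c\delta>0$, the contradiction sought. The obvious raw material is a sequence $(g_k)$ with $g_k(0)=0$, $\blochnorm{g_k}\leq1$, and $\modu{g_k(a_k)}>\omega(a_k)-\tfrac1k$, which exists by the very definition of $\omega$; the trouble is that these nearly extremal functions carry no built-in decay.

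The step I expect to be the genuine obstacle — and the reason this is stated as a conjecture rather than proved — is the passage from $(g_k)$ to $(f_k)$. The naive fix, replacing $g_k$ by $\tfrac12(g_k-g)$ where $g$ is a locally uniform subsequential limit of $(g_k)$ (which lies in $\Bloch(D)$ with $\blochnorm{g}\leq1$ and $g(0)=0$ by Montel's theorem and Lemma~\ref{convergenceintoBloch}), does yield locally uniform decay but destroys the lower bound at $a_k$: since $g$ is itself admissible for $\omega(a_k)$ we only know $\modu{g(a_k)}\leq\omega(a_k)$, which permits near-complete cancellation. On $\B_n$ and $\D^n$ one evades this using the closed form $\omega(z)=\rho(z,0)=\tfrac12\log\tfrac{1+\norm{z}}{1-\norm{z}}$ (see~\eqref{omegaequalityball}) together with explicit test functions — suitable renormalizations of $z\mapsto\bigl(\log\tfrac{1}{1-\langle z,\xi_k\rangle}\bigr)^2$ adapted to $a_k$. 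The point is that the logarithmic kernel is bounded \emph{uniformly in $z$} (because $\modu{1-\langle z,\xi_k\rangle}\geq 1-\norm{z}$), so its square renormalizes to bounded Bloch norm, decays locally uniformly as $a_k\to\partial D$, and still takes a value comparable to $\omega(a_k)$ at $a_k$; this is the engine behind the one-dimensional arguments of Ohno and of Hosokawa--Izuchi--Ohno and the unit-ball argument of Li--Stevi\'c. For a general bounded homogeneous domain we have neither a closed form for $\omega$ nor a family of test functions stable under the needed operations — recall that the square of a Bloch function need not be Bloch, as by Lemma~\ref{pointevaluationestimate} its invariant gradient can grow like $\omega(z)$. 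The line of attack I would pursue is therefore to isolate the purely function-theoretic input behind the ball and polydisk cases: show that on every bounded homogeneous domain $\omega$ is realized, up to a bounded additive error, by functions growing logarithmically along Bergman geodesics emanating from $0$, with Bloch-norm control uniform in the endpoint. Such a statement — a geometric fact about $D$ alone, decoupled from $\psi$ and $\varphi$, in the spirit of the remarks following Theorem~\ref{compactnesssufficiency} and of \cite{AllenColonna:2010} — would let one replay the squaring construction and would settle the conjecture.
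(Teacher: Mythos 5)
The statement you were asked about is stated in the paper as a \emph{conjecture}: the author explicitly says that ``more information about $\omega(z)$ on these domains is needed to prove the necessity of these conditions'' and offers no proof. So there is no proof in the paper to compare against, and your proposal --- which candidly stops short of a proof --- is an accurate assessment rather than a gap on your part. Your reduction is correct and matches the paper's own logic: sufficiency is Theorem~\ref{compactnesssufficiency}; $\psi\in\Hinfmu(D)$ follows from $\psi=\wco\1$; and necessity of the limit condition reduces, via Proposition~\ref{compactnesscharacterization}, to producing a bounded sequence in $\Bloch(D)$ that tends to $0$ locally uniformly while staying comparable to $\omega(a_k)$ at $a_k=\varphi(z_k)$. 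You have also correctly located the obstruction: the nearly extremal functions for $\omega(a_k)$ carry no decay, and subtracting a Montel limit can cancel the value at $a_k$.

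It is worth noting that the squared-logarithm construction you sketch is precisely what the paper \emph{does} carry out in the one case where it can: in Theorem~\ref{compactnesstheorem} for $\D^n$, the test functions are
$f_k(z)=\bigl(\Log\tfrac{4}{1-z_m\conj{\varphi_m(z^{(k)})}}\bigr)^2\big/\log\tfrac{4}{1-\modu{\varphi_m(z^{(k)})}^2}$,
which are bounded in $\lilBlochDist(\D^n)$, vanish locally uniformly, and evaluate at $\varphi(z^{(k)})$ to something comparable to $\log\tfrac{1+\modu{\varphi_m(z^{(k)})}}{1-\modu{\varphi_m(z^{(k)})}}$; combined with the upper bound \eqref{rhopolydisk} this settles the polydisk case without ever identifying $\omega$ there. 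One small caveat on your sketch: for a general domain it is not enough to make $\modu{f_k(a_k)}\geq c\,\omega(a_k)$ --- one also needs an \emph{upper} bound relating the quantity the test functions detect back to $\omega(\varphi(z))$ at points where $\varphi(z)$ is far from $\partial D$ (in the polydisk argument this is the role of inequality \eqref{rhopolydisk} and of estimate \eqref{polyestimate3}, which uses $\mu(z^{(k)})\psi(z^{(k)})\to0$). Your proposed intermediate goal --- realizing $\omega$ up to bounded error by logarithmic growth along Bergman geodesics with uniform Bloch-norm control --- is a reasonable formulation of the missing geometric input, and is consistent with the open questions the author lists in Section~\ref{Section:Conclusions}.
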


\section{On the Unit Polydisk}\label{Section:Polydisk}
In this section, we apply the results of the previous sections to the case when the bounded homogeneous domain is the unit polydisk $\D^n$.  In this case, we show the sufficient condition for compactness in Theorem \ref{compactnesssufficiency} is also necessary.  The weighted composition operator has not been studied in this setting before, and so the results in this section are new to the literature.

To characterize the bounded weighted composition operators completely in terms of the symbols, we must establish a closed form for $\omega(z)$ on the domain.  As stated previously, this is known for the unit ball.  However, for the unit polydisk, such a form is not known.  Thus, a literal application of the results from the previous two sections does not yield a characterization of boundedness completely in terms of the symbols.   The same can be said for the operator norm and the sufficient condition for compactness.  In this section, we will provide characterizations of boundedness and compactness completely in terms of the symbols, as well as provide norm estimates in a similar manner.  For $\psi \in H(\D^n)$ and $\varphi = (\varphi_1,\dots,\varphi_n) \in S(\D^n)$, define $$\vartheta_\mu(\psi,\varphi) = \sup_{z \in \D^n} \frac{1}{2}\mu(z)|\psi(z)|\sum_{j=1}^n\log\frac{1+|\varphi_j(z)|}{1-|\varphi_j(z)|}.$$

\begin{theorem}\label{boundednesstheorempolydisk} Let $\psi \in H(\D^n)$ and $\varphi=(\varphi_1,\dots,\varphi_n) \in S(D^n)$.  Then the following are equivalent:
\begin{enumerate}
\item[\listitem{a}] $\wco:\Bloch(\D^n)\to\Hinfmu(\D^n)$ is bounded.
\item[\listitem{b}] $\wco:\lilBlochDist(\D^n)\to\Hinfmu(\D^n)$ is bounded.
\item[\listitem{c}] $\psi \in \Hinfmu(\D^n)$ and $\vartheta_\mu(\psi,\varphi)$ is finite.
\end{enumerate}
\end{theorem}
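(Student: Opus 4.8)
The plan is to prove the cycle (a) $\Rightarrow$ (b) $\Rightarrow$ (c) $\Rightarrow$ (a), with the middle implication carrying essentially all of the work; the other two are nearly immediate. For (a) $\Rightarrow$ (b): $\lilBlochDist(\D^n)$ is a linear subspace of $\Bloch(\D^n)$ on which $\blochnorm{\cdot}$ restricts, so the restriction to it of a bounded operator $\wco$ is again bounded. For (c) $\Rightarrow$ (a): combining Lemma \ref{omega inequality} with the polydisk estimate (\ref{rhopolydisk}) gives, for every $z \in \D^n$, $\omega(\varphi(z)) \leq \rho(\varphi(z),0) \leq \frac{1}{2}\sum_{j=1}^n \log\frac{1+\modu{\varphi_j(z)}}{1-\modu{\varphi_j(z)}}$; multiplying by $\mu(z)\modu{\psi(z)}$ and taking the supremum over $z$ yields $\upsilon_\mu(\psi,\varphi) \leq \vartheta_\mu(\psi,\varphi)$, so (c) provides precisely the hypotheses $\psi \in \Hinfmu(\D^n)$ and $\upsilon_\mu(\psi,\varphi) < \infty$ required to invoke Theorem \ref{boundednesstheorem}(a).

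The heart of the matter is (b) $\Rightarrow$ (c), and the key step is the pointwise lower bound
\[
\omega_0(w) \;\geq\; \frac{1}{2\sqrt{n}}\sum_{j=1}^n \log\frac{1+\modu{w_j}}{1-\modu{w_j}}, \qquad w=(w_1,\dots,w_n)\in\D^n .
\]
One cannot simply test with $\frac{1}{\sqrt{n}}\sum_j \frac{1}{2}\Log\frac{1+\lambda_j\zeta_j}{1-\lambda_j\zeta_j}$: that function has Bloch seminorm at most $1$ but is not in $\lilBlochDist(\D^n)$. The remedy is to dilate. Given $w$, set $\lambda_j = \conj{w_j}/\modu{w_j}$ when $w_j \neq 0$ and $\lambda_j = 1$ otherwise, and for $0 < r < 1$ define
\[
f_w^{(r)}(\zeta) \;=\; \frac{1}{\sqrt{n}}\sum_{j=1}^n \frac{1}{2}\,\Log\frac{1+\lambda_j r\zeta_j}{1-\lambda_j r\zeta_j}, \qquad \zeta \in \D^n ,
\]
with $\Log$ the principal branch. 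Since $\zeta_j \mapsto \lambda_j r\zeta_j$ maps $\frac{1}{r}\D$ into $\D$ and $\eta \mapsto \frac{1}{2}\Log\frac{1+\eta}{1-\eta}$ is holomorphic on $\D$, the function $f_w^{(r)}$ is holomorphic on the polydisk $(\frac{1}{r}\D)^n \supset \conj{\D^n}$, hence has bounded first partials on $\D^n$; because $Q_f(z)^2 = \sum_{j=1}^n \modu{\frac{\partial f}{\partial z_j}(z)}^2(1-\modu{z_j}^2)^2$ on $\D^n$ (Cauchy--Schwarz applied to the stated form of $H_z$), this forces $Q_{f_w^{(r)}}(z) \to 0$ as $z \to \distbd{\D^n}$, so $f_w^{(r)} \in \lilBlochDist(\D^n)$.

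A direct differentiation gives $\frac{\partial f_w^{(r)}}{\partial z_j}(\zeta) = \frac{1}{\sqrt{n}}\cdot\frac{\lambda_j r}{1-\lambda_j^2 r^2\zeta_j^2}$, and since $\modu{1-\lambda_j^2 r^2\zeta_j^2} \geq 1 - r^2\modu{\zeta_j}^2 \geq 1-\modu{\zeta_j}^2$, the $j$-th term $\modu{\frac{\partial f_w^{(r)}}{\partial z_j}(z)}^2(1-\modu{z_j}^2)^2$ of $Q_{f_w^{(r)}}(z)^2$ is at most $\frac{1}{n}$; hence $Q_{f_w^{(r)}}(z) \leq 1$ for every $z \in \D^n$, so $\beta_{f_w^{(r)}} \leq 1$, and with $f_w^{(r)}(0) = 0$ we get $\blochnorm{f_w^{(r)}} = \beta_{f_w^{(r)}} \leq 1$, i.e.\ $f_w^{(r)}$ is admissible in the definition of $\omega_0(w)$. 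Finally $\lambda_j r w_j = r\modu{w_j} \in [0,1)$, so $f_w^{(r)}(w) = \frac{1}{2\sqrt{n}}\sum_{j=1}^n \log\frac{1+r\modu{w_j}}{1-r\modu{w_j}}$, which increases to $\frac{1}{2\sqrt{n}}\sum_{j=1}^n \log\frac{1+\modu{w_j}}{1-\modu{w_j}}$ as $r \uparrow 1$; taking the supremum over $r \in (0,1)$ yields the displayed bound.

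Granting that bound, evaluating it at $w = \varphi(z)$, multiplying by $\mu(z)\modu{\psi(z)}$, and taking the supremum over $z \in \D^n$ gives $\vartheta_\mu(\psi,\varphi) \leq \sqrt{n}\,\upsilon_{0,\mu}(\psi,\varphi)$. If (b) holds, Theorem \ref{boundednesstheorem}(b) gives $\psi \in \Hinfmu(\D^n)$ and $\upsilon_{0,\mu}(\psi,\varphi) < \infty$, hence $\vartheta_\mu(\psi,\varphi) < \infty$; that is (c), closing the cycle. The argument also yields the chain
\[
\frac{1}{\sqrt{n}}\,\vartheta_\mu(\psi,\varphi) \;\leq\; \upsilon_{0,\mu}(\psi,\varphi) \;\leq\; \upsilon_\mu(\psi,\varphi) \;\leq\; \vartheta_\mu(\psi,\varphi)
\]
(the outer inequalities as above, the middle one from $\omega_0 \leq \omega$), which underlies the operator-norm estimates in the rest of the section. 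The one genuine obstacle is the construction in the two preceding paragraphs: the test function must lie in $\lilBlochDist(\D^n)$ — which is exactly why the dilation parameter $r$ is forced, the undilated candidate being Bloch but not $*$-little Bloch — have Bloch seminorm at most $1$, and nevertheless take at $w$ a value comparable to the full $\ell^1$-type sum $\sum_j\log\frac{1+\modu{w_j}}{1-\modu{w_j}}$. The factor $\sqrt{n}$ that is inevitably lost is precisely the gap between that sum and the Bergman distance $\rho(\cdot,0)$ on $\D^n$, which is the $\ell^2$ combination of the coordinate distances.
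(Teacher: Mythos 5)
Your proposal is correct, and while your (a)$\Rightarrow$(b) and (c)$\Rightarrow$(a) coincide with the paper's argument, your proof of the key implication (b)$\Rightarrow$(c) takes a genuinely different route. The paper tests $\wco$ against $f(z)=\frac{1}{n(2+\log 4)}\sum_{j=1}^n\Log\frac{4}{1-z_j\overline{\varphi_j(\lambda)}}$, one function per point $\lambda\in\D^n$; these lie in $\lilBlochDist(\D^n)$ with norm at most $1$ with no dilation needed, and evaluation at $\varphi(\lambda)$ gives $\log\frac{4}{1-\modu{\varphi_j(\lambda)}^2}\ge\log\frac{1+\modu{\varphi_j(\lambda)}}{1-\modu{\varphi_j(\lambda)}}$, hence $\vartheta_\mu(\psi,\varphi)\le n(1+\log 2)\norm{\wco}$ directly, bypassing $\omega_0$ altogether. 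You instead dilate the extremal-type functions $\frac{1}{2}\Log\frac{1+\lambda_jr\zeta_j}{1-\lambda_jr\zeta_j}$, normalized in $\ell^2$ across coordinates, to obtain the pointwise lower bound $\omega_0(w)\ge\frac{1}{2\sqrt{n}}\sum_{j=1}^n\log\frac{1+\modu{w_j}}{1-\modu{w_j}}$ and then invoke Theorem \ref{boundednesstheorem}(b). Your computations check out: $Q_f(z)^2=\sum_{j=1}^n\modu{\frac{\partial f}{\partial z_j}(z)}^2(1-\modu{z_j}^2)^2$ on $\D^n$, the estimate $\modu{1-\lambda_j^2r^2\zeta_j^2}\ge 1-\modu{\zeta_j}^2$ gives $\beta_{f_w^{(r)}}\le 1$, holomorphy across $\overline{\D^n}$ forces $Q_{f_w^{(r)}}(z)\to 0$ at the distinguished boundary, and your remark that the undilated candidate is Bloch but not $*$-little Bloch correctly identifies why the parameter $r$ is forced. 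Your route buys two things: a sharper constant, since $\frac{1}{\sqrt{n}}\ge\frac{1}{n(1+\log 2)}$ for all $n\ge 1$, so your chain $\frac{1}{\sqrt{n}}\vartheta_\mu\le\upsilon_{0,\mu}\le\upsilon_\mu\le\vartheta_\mu$ strictly improves the lower estimate in Theorem \ref{polynorm}; and a genuine two-sided pinch $\frac{1}{2\sqrt{n}}\sum_{j=1}^n\log\frac{1+\modu{w_j}}{1-\modu{w_j}}\le\omega_0(w)\le\omega(w)\le\rho(w,0)$ on the polydisk, which speaks directly to open question (1) of Section \ref{Section:Conclusions}. What the paper's choice buys in exchange is economy: membership in $\lilBlochDist(\D^n)$ and the norm bound come from a single citation to \cite{Allen:2009}, and the same family of logarithms reappears (squared) in the compactness argument of Theorem \ref{compactnesstheorem}.
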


\begin{proof}
The implication $(a) \Rightarrow (b)$ is clear.  So assume $\wco:\lilBlochDist(\D^n) \to \Hinfmu(\D^n)$ is bounded.  Then $\psi = \wco\1 \in \Hinfmu(\D^n)$.  Fix $j \in \{1,\dots,n\}$ and $\lambda \in \D^n$.  Then the function $$h_j(z) = \frac{1}{n(2+\log 4)}\Log\frac{4}{1-z_j\overline{\varphi_j(\lambda)}}$$ is in $\lilBlochDist(\D^n)$ with $\blochnorm{h_j} \leq \frac{1}{n}$ (see \cite{Allen:2009}).  Define $$f(z) = \sum_{j=1}^n h_j(z) = \frac{1}{n(2+\log 4)}\sum_{j=1}^n \Log\frac{4}{1-z_j\overline{\varphi_j(\lambda)}}.$$  Then $f \in \lilBlochDist(\D^n)$ with $\blochnorm{f} \leq \sum_{j=1}^n \blochnorm{h_j} \leq 1.$  Since $\wco$ is bounded, we have
$$\begin{aligned}
\norm{\wco} &\geq \munorm{\wco f}\\
&\geq \mu(\lambda)\modu{\psi(\lambda)}\modu{f(\varphi(\lambda))}\\
&= \frac{2}{n(2+\log 4)}\frac{1}{2}\mu(\lambda)\modu{\psi(\lambda)}\sum_{j=1}^n\log\frac{4}{1-\modu{\varphi_j(\lambda)}^2}\\
&\geq \frac{1}{n(1+\log 2)} \frac{1}{2}\mu(\lambda)\modu{\psi(\lambda)}\sum_{j=1}^n\log\frac{1+\modu{\varphi_j(\lambda)}}{1-\modu{\varphi_j(\lambda)}}.
\end{aligned}$$  Taking the supremum over all $\lambda \in \D^n$, we obtain \begin{equation}\label{newopbound}\vartheta_\mu(\psi,\varphi) = \sup_{\lambda \in \D^n}\; \frac{1}{2}\mu(\lambda)\modu{\psi(\lambda)}\sum_{j=1}^n\log\frac{1+\modu{\varphi_j(\lambda)}}{1-\modu{\varphi_j(\lambda)}} \leq n(1+\log 2)\norm{\wco},\end{equation} which is finite since $\wco$ is assumed to be bounded.

Finally, suppose $\psi \in \Hinfmu(\D^n)$ and $\vartheta_\mu(\psi,\varphi)$ is finite.  By relation (\ref{rhopolydisk}), we have $$\upsilon_\mu(\psi,\varphi) = \sup_{z \in \D^n}\; \frac{1}{2}\mu(z)\modu{\psi(z)}\omega(\varphi(z)) \leq \sup_{z \in \D^n}\; \mu(z)\modu{\psi(z)}\sum_{j=1}^n\log\frac{1+\modu{\varphi_j(z)}}{1-\modu{\varphi_j(z)}} = \vartheta_\mu(\psi,\varphi),$$ which is finite.  Thus by Theorem \ref{boundednesstheorem}, $\wco$ is bounded from $\Bloch(\D^n)$ to $\Hinfmu(\D^n)$, completing the proof.
\end{proof}

As mentioned before, without an equivalent form for $\omega(z)$ on $\D^n$, a norm equality purely in terms of the symbols can not be established.  The norm estimates we provide follow immediately from relations (\ref{omegaequalityball}), (\ref{rhopolydisk}), (\ref{newopbound}), and Theorem \ref{boundednesstheorem}.

\begin{theorem}\label{polynorm} Let $\psi \in H(\D^n)$ and $\varphi=(\varphi_1,\dots,\varphi_n) \in S(\D^n)$.  If $W_{\psi,\varphi}:\Bloch(\D^n) \to \Hinfmu(\D^n)$ is bounded, then $$\max\;\left\{\|\psi\|_{\Hinfmu}, \frac{1}{n(1+\log 2)}\vartheta_\mu(\psi,\varphi)\right\} \leq \|W_{\psi,\varphi}\| \leq \max\left\{\|\psi\|_{\Hinfmu},\vartheta_\mu(\psi,\varphi)\right\}.$$
\end{theorem}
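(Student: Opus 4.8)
The plan is to obtain both inequalities by assembling pieces that are already in place: the exact norm formula of Theorem \ref{boundednesstheorem}(a), the comparison estimates for $\omega$ on the polydisk collected in Section \ref{Section:Preliminaries}, and the test-function bound \textup{(}\ref{newopbound}\textup{)} extracted from the proof of Theorem \ref{boundednesstheorempolydisk}. No new construction should be required; the whole argument is a pair of substitutions.

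For the upper bound, I would start from $\norm{\wco} = \max\{\munorm{\psi},\upsilon_\mu(\psi,\varphi)\}$, which Theorem \ref{boundednesstheorem}(a) provides since $\wco$ is assumed bounded. It then suffices to show $\upsilon_\mu(\psi,\varphi) \le \vartheta_\mu(\psi,\varphi)$. Fix $z \in \D^n$. By Lemma \ref{omega inequality}, $\omega(\varphi(z)) \le \rho(\varphi(z),0)$, and by relation \textup{(}\ref{rhopolydisk}\textup{)}, $\rho(\varphi(z),0) \le \frac12\sum_{j=1}^n\log\frac{1+\modu{\varphi_j(z)}}{1-\modu{\varphi_j(z)}}$. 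Multiplying through by $\mu(z)\modu{\psi(z)}$ and taking the supremum over $z \in \D^n$ gives $\upsilon_\mu(\psi,\varphi) \le \vartheta_\mu(\psi,\varphi)$, and hence $\norm{\wco} \le \max\{\munorm{\psi},\vartheta_\mu(\psi,\varphi)\}$.

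For the lower bound, I would combine two observations. First, the same identity $\norm{\wco} = \max\{\munorm{\psi},\upsilon_\mu(\psi,\varphi)\}$ from Theorem \ref{boundednesstheorem}(a) immediately yields $\munorm{\psi} \le \norm{\wco}$. Second, inequality \textup{(}\ref{newopbound}\textup{)}, established in the proof of Theorem \ref{boundednesstheorempolydisk} by testing $\wco$ against the normalized logarithmic functions $h_j$, reads $\vartheta_\mu(\psi,\varphi) \le n(1+\log 2)\norm{\wco}$, i.e. $\frac{1}{n(1+\log 2)}\vartheta_\mu(\psi,\varphi) \le \norm{\wco}$. Taking the maximum of these two lower bounds gives the left-hand inequality, completing the proof.

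The only genuine difficulty is conceptual rather than computational: because no closed form for $\omega(z)$ on $\D^n$ is known, the comparison $\omega(\varphi(z)) \le \frac12\sum_j\log\frac{1+\modu{\varphi_j(z)}}{1-\modu{\varphi_j(z)}}$ used on the upper side and the explicit test-function estimate used on the lower side need not coincide, which is exactly why one is forced to settle for a two-sided estimate with the gap constant $n(1+\log 2)$ rather than an exact norm formula as in the ball case. The one point to verify carefully is that the normalization constants attached to the $h_j$ in \textup{(}\ref{newopbound}\textup{)} are propagated correctly; past that, everything is a direct substitution into Theorem \ref{boundednesstheorem}.
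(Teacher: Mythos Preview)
Your proposal is correct and matches the paper's own approach: the paper states that the estimates follow immediately from relations \textup{(}\ref{omegaequalityball}\textup{)}, \textup{(}\ref{rhopolydisk}\textup{)}, \textup{(}\ref{newopbound}\textup{)}, and Theorem \ref{boundednesstheorem}, which is exactly the assembly you outline (the reference to \textup{(}\ref{omegaequalityball}\textup{)} is not actually needed; your use of Lemma \ref{omega inequality} together with \textup{(}\ref{rhopolydisk}\textup{)} suffices). The only point worth noting is that \textup{(}\ref{newopbound}\textup{)} was derived for $\wco$ acting on $\lilBlochDist(\D^n)$, but since the test functions $h_j$ lie in $\lilBlochDist(\D^n)\subset\Bloch(\D^n)$, the same bound holds a fortiori for the $\Bloch(\D^n)\to\Hinfmu(\D^n)$ norm.
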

Finally, we show the sufficient condition for compactness from Theorem \ref{compactnesssufficiency} is necessary.

\begin{theorem}\label{compactnesstheorem} Let $\psi \in H(\D^n)$ and $\varphi=(\varphi_1,\dots,\varphi_n) \in S(\D^n)$.  Then the following are equivalent:
\begin{enumerate}
\item[\listitem{a}] $\wco:\Bloch(\D^n)\to\Hinfmu(\D^n)$ is compact.
\item[\listitem{b}] $\wco:\lilBlochDist(\D^n)\to\Hinfmu(\D^n)$ is compact.
\item[\listitem{c}] $\psi \in \Hinfmu(\D^n)$ and $$\lim_{\varphi(z)\to\partial \D^n} \frac{1}{2}\mu(z)\modu{\psi(z)}\sum_{j=1}^n\log\frac{1+\modu{\varphi_j(z)}}{1-\modu{\varphi_j(z)}} =0.$$
\end{enumerate}
\end{theorem}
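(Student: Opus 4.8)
The plan is to establish the cycle of implications $(a) \Rightarrow (b) \Rightarrow (c) \Rightarrow (a)$. The implication $(a) \Rightarrow (b)$ is immediate, since $\lilBlochDist(\D^n)$ is a closed subspace of $\Bloch(\D^n)$ and the restriction of a compact operator to a closed subspace is compact. The implication $(c) \Rightarrow (a)$ is essentially a packaging of Theorem \ref{compactnesssufficiency}: by relation (\ref{rhopolydisk}) together with Lemma \ref{omega inequality}, we have $\omega(\varphi(z)) \le \rho(\varphi(z),0) \le \frac{1}{2}\sum_{j=1}^n \log\frac{1+|\varphi_j(z)|}{1-|\varphi_j(z)|}$, so the hypothesis in $(c)$ forces $\lim_{\varphi(z)\to\partial\D^n} \frac12\mu(z)|\psi(z)|\omega(\varphi(z)) = 0$; since we are also assuming $\psi\in\Hinfmu(\D^n)$, Theorem \ref{compactnesssufficiency} applies and $\wco$ is compact. (One should note the convention that $\varphi(z)\to\partial\D^n$ exactly when $\rho(\varphi(z),\partial\D^n)\to 0$, which for the polydisk means $\max_j |\varphi_j(z)| \to 1$, so the two "boundary" limits match up.)

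The substance of the theorem is the implication $(b) \Rightarrow (c)$, and here is where I would spend the effort. That $\psi = \wco\1 \in \Hinfmu(\D^n)$ is automatic from boundedness of $\wco$ (which follows from compactness), so the real content is the little-oh decay of $\frac12\mu(z)|\psi(z)|\sum_j \log\frac{1+|\varphi_j(z)|}{1-|\varphi_j(z)|}$. The strategy is the standard test-function argument: suppose $(b)$ holds but the limit in $(c)$ fails. Then there exist $\varepsilon_0 > 0$ and a sequence $(z^{(k)})$ in $\D^n$ with $\varphi(z^{(k)}) \to \partial\D^n$ such that $\frac12\mu(z^{(k)})|\psi(z^{(k)})|\sum_{j=1}^n \log\frac{1+|\varphi_j(z^{(k)})|}{1-|\varphi_j(z^{(k)})|} \ge \varepsilon_0$ for all $k$. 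The goal is to build a bounded sequence $(f_k)$ in $\lilBlochDist(\D^n)$ converging to $0$ locally uniformly for which $\munorm{\psi(f_k\circ\varphi)}$ does not go to $0$, contradicting Proposition \ref{compactnesscharacterization} (which characterizes compactness of $\wco$ on $\lilBlochDist(\D^n)$ in the same way — one should check or remark that the proof of Proposition \ref{compactnesscharacterization} goes through verbatim for the little-$*$-Bloch space, using that $\lilBlochDist(\D^n)$ is closed under locally uniform limits of norm-bounded sequences by Lemma \ref{convergenceintoBloch} and the definition of $\lilBlochDist$). The natural candidates are built from the logarithmic functions already used in the proof of Theorem \ref{boundednesstheorempolydisk}: set $a^{(k)}_j = \varphi_j(z^{(k)})$ and take $f_k(w) = \frac{1}{n(2+\log 4)}\sum_{j=1}^n \Log\frac{4}{1-w_j\,\overline{a^{(k)}_j}}$, which lies in $\lilBlochDist(\D^n)$ with $\blochnorm{f_k}\le 1$; evaluating at $w = \varphi(z^{(k)})$ recovers, up to the constant $\frac{1}{n(1+\log 2)}$, the sum $\frac12\sum_j\log\frac{1+|a^{(k)}_j|}{1-|a^{(k)}_j|}$, so $\mu(z^{(k)})|\psi(z^{(k)})||f_k(\varphi(z^{(k)}))| \ge \frac{\varepsilon_0}{n(1+\log 2)}$.

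The main obstacle is that this $(f_k)$ need not converge to $0$ locally uniformly — each $\Log\frac{4}{1-w_j\overline{a^{(k)}_j}}$ is bounded near $w=0$ regardless of $k$, so $f_k$ does not vanish locally. The fix is the usual subtraction trick: since $\varphi_j(z^{(k)}) = a^{(k)}_j$ and $\sum_j\log\frac{1}{1-|a^{(k)}_j|^2}\to\infty$ (because $\varphi(z^{(k)})\to\partial\D^n$ and the left side of $(c)$ stays bounded below while $\mu|\psi|$ is bounded above), at least one coordinate $|a^{(k)}_j|\to 1$ along a subsequence; passing to that subsequence and possibly relabeling, one replaces $f_k$ by $f_k - f_k(0)\cdot\1$ or, better, normalizes so that the "mass" escapes to the boundary — concretely, one can use $g_k(w) = f_k(w)/\log\frac{1}{1-|a^{(k)}_{j_k}|^2}$ scaled appropriately, or subtract a fixed-modulus constant, arranging $g_k\to 0$ uniformly on compact subsets while $|g_k(\varphi(z^{(k)}))|$ stays bounded below. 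Since $\Hinfmu$-membership of $\psi$ gives $\mu(z^{(k)})|\psi(z^{(k)})|\le\munorm{\psi}$, one must be careful that dividing by a growing quantity does not kill the lower bound; the delicate point is balancing the growth of $\sum_j\log\frac{1+|a^{(k)}_j|}{1-|a^{(k)}_j|}$ against the normalization needed to force local uniform convergence. I would handle this by separating the coordinates into those with $|a^{(k)}_j|$ bounded away from $1$ (whose contribution to the sum is bounded, hence negligible after normalization) and those with $|a^{(k)}_j|\to 1$, and building the test function only from the latter, using a single dominant coordinate if necessary. Once a valid $(g_k)$ is produced, Proposition \ref{compactnesscharacterization} (applied to $\lilBlochDist$) yields the contradiction, completing $(b)\Rightarrow(c)$.
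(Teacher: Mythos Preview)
Your handling of $(a)\Rightarrow(b)$ and $(c)\Rightarrow(a)$ is correct and matches the paper. The gap is in $(b)\Rightarrow(c)$, and it is exactly the ``delicate point'' you flag but do not resolve.

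Your candidate $f_k(w)=\frac{1}{n(2+\log 4)}\sum_j \Log\frac{4}{1-w_j\overline{a^{(k)}_j}}$ does not tend to $0$ locally uniformly, as you note. Neither proposed repair works. Subtracting $f_k(0)$ leaves $\frac{1}{n(2+\log 4)}\sum_j \Log\frac{1}{1-w_j\overline{a^{(k)}_j}}$, which on compacta is bounded but does not vanish as $|a^{(k)}_j|\to 1$. Dividing by $L_k:=\log\frac{1}{1-|a^{(k)}_{j_k}|^2}$ does force $g_k\to 0$ locally uniformly, but then $|g_k(\varphi(z^{(k)}))|$ is only of order $1$, so $\mu(z^{(k)})|\psi(z^{(k)})|\,|g_k(\varphi(z^{(k)}))|\asymp \mu(z^{(k)})|\psi(z^{(k)})|$. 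Since boundedness of $\wco$ already gives that the supremum in Theorem~\ref{boundednesstheorempolydisk}(c) is finite while $\sum_j\log\frac{1+|a^{(k)}_j|}{1-|a^{(k)}_j|}\to\infty$, one has $\mu(z^{(k)})|\psi(z^{(k)})|\to 0$ anyway, and no contradiction results.

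The paper's device is to \emph{square} the logarithm before normalizing: for a coordinate $m$ with $|a^{(k)}_m|\to 1$, set
\[
f_k(w)=\frac{\Bigl(\Log\dfrac{4}{1-w_m\,\overline{a^{(k)}_m}}\Bigr)^2}{\log\dfrac{4}{1-|a^{(k)}_m|^2}}.
\]
This is bounded in $\lilBlochDist(\D^n)$ and tends to $0$ locally uniformly (numerator bounded on compacta, denominator $\to\infty$), yet $f_k(\varphi(z^{(k)}))=\log\frac{4}{1-|a^{(k)}_m|^2}$ --- the full logarithmic weight survives, not merely a bounded quantity. Compactness (via Proposition~\ref{compactnesscharacterization}) then forces $\mu(z^{(k)})|\psi(z^{(k)})|\log\frac{4}{1-|a^{(k)}_m|^2}\to 0$, controlling the $m$-th summand. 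For coordinates $\ell$ with $|a^{(k)}_\ell|$ bounded away from $1$, no test function is needed: the observation $\mu(z^{(k)})|\psi(z^{(k)})|\to 0$ (which you do not isolate, though it is implicit in your parenthetical) disposes of those terms directly. Without the squared-log test function the argument cannot close.
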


\begin{proof}
The implication $(a) \Rightarrow (b)$ is clear.  Suppose $\wco:\lilBlochDist(\D^n) \to \Hinfmu(\D^n)$ is compact.  So by the boundedness of $\wco$ and Theorem \ref{boundednesstheorempolydisk}, $\psi \in \Hinfmu(\D^n)$ and \begin{equation}\label{supboundpolydisk}\sup_{z \in \D^n}\;\frac{1}{2}\mu(z)\modu{\psi(z)}\sum_{j=1}^n\log\frac{1+\modu{\varphi_j(z)}}{1-\modu{\varphi_j(z)}} < \infty.\end{equation}  Let $(z^{(k)})$ be a sequence in $\D^n$ such that $\varphi(z^{(k)}) \to \partial\D^n$ as $k \to \infty$.  So there is an index $m \in \{1,\dots,n\}$ such that $\modu{\varphi_m(z^{(k)})} \to 1$ as $k \to \infty$.  It follows that $$\sum_{j=1}^n \log\frac{1+\modu{\varphi_j(z^{(k)})}}{1-\modu{\varphi_j(z^{(k)})}} \to \infty$$ as $k \to \infty$.  So by inequality (\ref{supboundpolydisk}), it must be the case that $$\lim_{k \to \infty} \mu(z^{(k)})\psi(z^{(k)}) = 0.$$

For $k \in \N$, define $$f_k(z) = \displaystyle\frac{\left(\Log\displaystyle\frac{4}{1-z_m\conj{\varphi_m(z^{(k)})}}\right)^2}{\log\displaystyle\frac{4}{1-\modu{\varphi_m(z^{(k)})}^2}}.$$ Then the sequence $(f_k)$ is bounded in $\lilBlochDist(\D^n)$ and converges to 0 locally uniformly in $\D^n$ (see \cite{Allen:2009}).  By the compactness of $\wco$, we have
\begin{eqnarray}
\notag \frac{1}{2}\mu(z^{(k)})\modu{\psi(z^{(k)})}\log\frac{1+\modu{\varphi_m(z^{(k)})}}{1-\modu{\varphi_m(z^{(k)})}} &\leq& \frac{1}{2}\mu(z^{(k)})\modu{\psi(z^{(k)})}\log\frac{4}{1-\modu{\varphi_m(z^{(k)})}^2}\\
\notag &=& \frac{1}{2}\mu(z^{(k)})\modu{\psi(z^{(k)})f_k(\varphi(z^{(k)}))}\\
\label{polyestimate2}&\leq& \munorm{\psi(f_k\circ\varphi)} \to 0
\end{eqnarray}
as $k \to \infty$.

Now let $\ell \in \{1,\dots,n\}$ such that $\modu{\varphi_\ell(z^{(k)})} \not\to 1$ as $k \to \infty$.  So there exists $r \in (0,1)$ such that $\modu{\varphi_\ell(z^{(k)})}\leq r$ for all $k \in \N$.  Since $\mu(z^{(k)})\psi(z^{(k)}) \to 0$ as $k \to \infty$, we have
\begin{equation}\label{polyestimate3}\frac{1}{2}\mu(z^{(k)})\modu{\psi(z^{(k)})}\log\frac{1+\modu{\varphi_\ell(z^{(k)})}}{1-\modu{\varphi_\ell(z^{(k)})}} \leq \frac{1}{2}\mu(z^{(k)})\modu{\psi(z^{(k)})}\log\frac{1+r}{1-r} \to 0\end{equation} as $k \to \infty$.  So by inequalities (\ref{polyestimate2}) and (\ref{polyestimate3}), we have $$\lim_{k \to \infty} \frac{1}{2}\mu(z^{(k)})\modu{\psi(z^{(k)})}\sum_{j=1}^n\log\frac{1+\modu{\varphi_j(z^{(k)}))}}{1-\modu{\varphi_j(z^{(k)}))}} = 0,$$ showing $(b) \Rightarrow (c)$.

Lastly, suppose $\psi \in \Hinfmu(\D^n)$ and $$\lim_{\varphi(z)\to\partial \D^n} \frac{1}{2}\mu(z)\modu{\psi(z)}\sum_{j=1}^n\log\frac{1+\modu{\varphi_j(z)}}{1-\modu{\varphi_j(z)}} =0.$$  Then from relation (\ref{rhopolydisk}), we obtain
$$\lim_{\varphi(z)\to\partial\D^n} \frac{1}{2}\mu(z)\modu{\psi(z)}\omega(\varphi(z)) \leq \lim_{\varphi(z)\to\partial\D^n}\frac{1}{2}\mu(z)\modu{\psi(z)}\sum_{j=1}^n\log\frac{1+\modu{\varphi(z)}}{1-\modu{\varphi(z)}} = 0.$$  Therefore, by Theorem \ref{compactnesssufficiency}, $\wco$ is compact, proving $(c) \Rightarrow (a)$.
\end{proof}

\section{Conclusions and Open Questions}\label{Section:Conclusions}
The general bounded homogeneous domains offer a framework by which traditional operator theory on spaces of holomorphic functions in several complex variables (specifically on the unit ball and the unit polydisk) can be unified.  As with any generalization, any results we gain are met with trade-offs.  One of the trade-offs is the use of the $\omega(z)$ in characterizing boundedness and compactness of the multiplication, composition, and weighted composition operators.  For the unit ball, a closed form for $\omega(z)$ is known, and upper bounds are known for the unit polydisk.  However, no closed form for the polydisk, or no bounds known for other bounded homogeneous domains presents a temporary roadblock.  This roadblock presents itself when we consider conditions for compactness of the composition and weighted composition operators.  

We conclude this paper with the following open questions we hope researchers will take up.
\begin{enumerate}
\item[\listitem{1}] Is there a closed form for $\omega(z)$ on $\D^n$?  Is there a sharper bound than $\rho(z,0)$?
\item[\listitem{2}] Are there other specific bounded homogeneous domains for which a closed form for $\omega(z)$ can be determined?
\item[\listitem{3}] Is Conjecture \ref{conjecture} true for all bounded homogeneous domains?  Is Conjecture \ref{conjecture} true for any other specific bounded homogeneous domain?
\end{enumerate}

\bibliographystyle{amsplain}
\bibliography{references.bib}
\end{document}